\DeclareMathOperator*{\Ran}{Ran}
\DeclareMathOperator*{\Ker}{Ker}
\DeclareMathOperator*{\Fix}{Fix}
\DeclareMathOperator*{\dist}{dist}
\DeclareMathOperator*{\R}{Re}
\newcommand{\e}{\mathrm{e}}
\newcommand{\ii}{\mathrm{i}}
\newcommand{\dd}{\mathrm{d}}
\newcommand{\B}{\mathcal{B}}
\newcommand{\LL}{\mathcal{L}}
\newcommand{\C}{\mathcal{C}}
\newcommand{\D}{\mathcal{D}}
\newcommand{\T}{\mathbb{T}}
\newcommand{\RR}{\mathbb{R}}
\newcommand{\CC}{\mathbb{C}}
\newcommand{\NN}{\mathbb{N}}
\newcommand{\ZZ}{\mathbb{Z}}
\newcommand{\DD}{\mathbb{D}}
\newcommand{\A}{\mathbb{A}}
\newcommand{\PP}{\mathbb{P}}
\newtheorem{thm}{Theorem}[section]
\theoremstyle{definition}
\newtheorem{ex}[thm]{Example}
\newtheorem{rem}[thm]{Remark}
\newtheoremstyle{mystyle}{}{}{\sl}{}{\bf}{.}{ }{}
\theoremstyle{mystyle}
\newtheorem{mythm}[thm]{Theorem}
\newtheorem{mylem}[thm]{Lemma}
\newtheorem{myprp}[thm]{Proposition}
\newtheorem{mycor}[thm]{Corollary}
\numberwithin{equation}{section}  
\begin{document}

\title[Rates of decay in the Katznelson-Tzafriri theorem]{Rates of decay in the classical Katznelson-Tzafriri theorem}
\subjclass[2010]{Primary: 47A05, 47D06; secondary: 47A10, 47A35.}
\author{David Seifert}
\address{Mathematical Institute, University of Oxford, Andrew Wiles Building, Radcliffe Observatory Quarter, Woodstock Road, Oxford\;\;OX2 6GG, United Kingdom}
\curraddr{Balliol College, Oxford\;\;OX1 3BJ, United Kingdom}
\email{david.seifert@balliol.ox.ac.uk}
\date{6 March 2014}

\begin{abstract}
 Given a power-bounded operator $T$, the theorem of Katznelson and Tzafriri states that $\|T^n(I-T)\|\to0$ as $n\to\infty$ if and only if the spectrum $\sigma(T)$ of $T$ intersects the unit circle $\T$ in at most the point $1$. This paper investigates the rate at which  decay takes place when $\sigma(T)\cap\T=\{1\}$. The results obtained lead in particular to both upper and lower bounds on this rate of decay in terms of the growth of the resolvent operator $R(\e^{\ii\theta},T)$ as $\theta\to0$.  In the special case of polynomial resolvent growth, these bounds are then shown to be optimal for general Banach spaces but not in the Hilbert space case. 
  
 \end{abstract}

\maketitle

\section{Introduction}
The Katznelson-Tzafriri theorem (see \cite[Theorem~1]{KT}) is one of the cornerstones of the asymptotic theory of operator semigroups; for surveys, see for instance \cite{Ba} and \cite{CT}. In its original and simplest form, the result concerns the asymptotic behaviour of $\|T^n(I-T)\|$ as $n\to\infty$ for suitable operators $T$ and has applications both in the theory of iterative methods (see \cite{Nev}) and to zero-two laws for stochastic processes (see \cite{KT} and \cite{OS}). Writing $\T$ for the unit circle $\{\lambda\in\CC:|\lambda|=1\}$, it can be stated as follows.

\begin{mythm}\label{KT_classical}
Let $X$ be a complex Banach space and let $T\in \B(X)$ be a power-bounded operator. Then \begin{equation}\label{KT}\lim_{n\to\infty}\|T^n(I-T)\|=0\end{equation}  if and only if $\sigma(T)\cap\T\subset\{1\}$.
\end{mythm}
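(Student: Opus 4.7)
The plan is to treat the two implications separately: the necessity direction is an approximate-eigenvector argument, while sufficiency requires Banach algebra / spectral synthesis machinery.

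For the necessity direction I would argue by contraposition. Suppose $\lambda \in \sigma(T) \cap \T$ with $\lambda \ne 1$. Power-boundedness forces $\sigma(T) \subset \overline{\DD}$, so $\lambda$ lies on the topological boundary of $\sigma(T)$ and consequently belongs to the approximate point spectrum. Choose unit vectors $x_k \in X$ with $\|(T - \lambda)x_k\| \to 0$. From the telescoping identity $T^n - \lambda^n I = (T - \lambda)\sum_{j=0}^{n-1}\lambda^{n-1-j}T^j$ together with $\sup_n\|T^n\| =: M < \infty$, I conclude that for each fixed $n$ one has $\|(T^n - \lambda^n I)x_k\| \to 0$ as $k\to\infty$. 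Hence
\begin{equation*}
\|T^n(I-T)\| \ge \limsup_{k\to\infty}\|(T^n - T^{n+1})x_k\| = |\lambda^n - \lambda^{n+1}| = |1-\lambda| > 0,
\end{equation*}
which contradicts \eqref{KT}.

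For the sufficiency direction I would pass to the closed unital commutative subalgebra $\mathfrak{A} \subset \B(X)$ generated by $T$. Power-boundedness ensures that the assignment $f \mapsto f(T) := \sum_{n\ge 0}\hat f(n) T^n$ extends to a continuous algebra homomorphism from the Wiener algebra $A^+(\T) = \{\sum_{n\ge0}a_n z^n : \sum|a_n| < \infty\}$ into $\mathfrak{A}$, with norm at most $M$. Under this calculus $f(z) = 1-z$ maps to $I-T$, so the goal becomes $\|T^n f(T)\|\to 0$. The decisive step is the abstract Tauberian principle lying at the heart of Katznelson and Tzafriri's original argument: if $f \in A^+(\T)$ vanishes on the hull $\sigma(T)\cap\T$ and that hull is a set of spectral synthesis in $A^+(\T)$, then $\|T^n f(T)\|\to 0$.

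To apply this with $f(z)=1-z$, I would invoke two ingredients. First, one identifies the part of the Gelfand spectrum of $\mathfrak{A}$ lying on $\T$ with a subset of $\sigma(T)\cap\T \subset \{1\}$, so $f$ vanishes there. Second, one uses the classical fact that every singleton in $\T$ is a set of spectral synthesis for $A^+(\T)$, so $1-z$ can be approximated in $A^+$-norm by functions $g$ that vanish on an open arc around $1$. For such a $g$ the conclusion $\|T^n g(T)\|\to 0$ can be established via a contour-deformation argument: since $R(\,\cdot\,,T)$ extends analytically across $\T\setminus\{1\}$, the expression $T^n g(T) = (2\pi\ii)^{-1}\oint z^n g(z) R(z,T)\,\dd z$ may be pushed through $\T$ on the set where $g$ vanishes, producing decay from the oscillatory factor $z^n$. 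I expect the main obstacle to be this spectral-synthesis step: one must simultaneously ensure the approximants $g_\varepsilon \to 1-z$ vanish near $1$ and keep $\|g_\varepsilon\|_{A^+}$ controlled, which is delicate because $A^+$-norm approximation is genuinely stronger than uniform approximation and naive truncation or Fejér means need not preserve the absolute summability of Fourier coefficients.
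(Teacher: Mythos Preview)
The paper does not supply its own proof of Theorem~\ref{KT_classical}; it is quoted from \cite{KT} as background. That said, both implications are recovered elsewhere in the paper by rather different means than you propose, and it is worth contrasting.

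For necessity, your approximate-eigenvector argument is correct, but the paper's incidental proof (in the first paragraph of the proof of Theorem~\ref{thm2}) is shorter: by the polynomial spectral mapping theorem $\lambda^n(1-\lambda)\in\sigma(T^n(I-T))$ whenever $\lambda\in\sigma(T)$, so $|1-\lambda|\le\|T^n(I-T)\|$ for every $\lambda\in\sigma(T)\cap\T$, and the conclusion follows immediately. This avoids choosing approximate eigenvectors and the telescoping identity altogether.

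For sufficiency, your plan follows the original Katznelson--Tzafriri route via the $A^+(\T)$ functional calculus and spectral synthesis at the singleton $\{1\}$. That strategy is sound; your identified obstacle (controlling the $A^+$-norm of approximants vanishing near $1$) is the genuine work, and is handled in \cite{KT}. The paper, by contrast, obtains sufficiency as a by-product of the quantitative Theorem~\ref{thm}: a direct contour-integral estimate for $T^n(I-T)$ using a carefully designed weight $h_r$ and a contour hugging $\partial\Omega_\alpha$, with no appeal to spectral synthesis or Wiener-algebra approximation. Your final paragraph's contour-deformation sketch for $T^n g(T)$ is the weakest part of the proposal: ``pushing through $\T$ where $g$ vanishes'' does not by itself produce decay, since $\sigma(T)$ may accumulate at $1$ from inside $\DD$ and the contour must still enclose it; one really needs either the synthesis argument in full or an explicit decay mechanism of the kind in Theorem~\ref{thm}.
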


Since its discovery in 1986, the Katznelson-Tzafriri theorem has attracted a considerable amount of interest, and this has lead to a number of extensions and improvements of the original result; see \cite[Section~4]{CT} for an overview, and also \cite{Leka}, \cite{Seifert} and \cite{Zarr}.  One aspect which so far has been studied only in special cases, however, is the \textsl{rate} at which decay takes place in \eqref{KT}; see for instance \cite{DLM}, \cite{Dun}, \cite[Chapter~4]{Nev},  \cite{Nev3} and \cite{Nev2}. Of course, for operators $T$ satisfying $\sigma(T)\cap\T=\emptyset$ the question is of no real interest, since in this case $r(T)<1$ and the decay is necessarily exponential. The focus here, therefore,  will be on the case where $\sigma(T)\cap\T=\{1\}$, with the aim of relating the rate of decay in \eqref{KT} to the growth of  the norm $\|R(\e^{\ii\theta},T)\|$ of the resolvent operator as $\theta\to0$. Once the behaviour of the resolvent near its singularity is adequately taken into account, it turns out to be possible not only to obtain explicit bounds on the quantity $\|T^n(I-T)\|$ for sufficiently large $n\geq0$ but also to establish their sharpness, or lack thereof, in an important special case.  Crucial in this undertaking are certain techniques which can be viewed as discrete analogues of ideas developed recently in the  context of $C_0$-semigroups, where they can be used to study energy decay for damped wave equations; see \cite{BEPS}, \cite{BCT}, \cite{BD}, \cite{BT}, \cite{Mar} and the references therein. 
 
The remainder of the paper divides into two  parts.  The first, Section~\ref{sec:main_rates}, contains the main general results. Most importantly, these include both a lower (Corollary~\ref{cor:lb}) and an upper bound (Theorem~\ref{thm}) in terms of the growth of the resolvent near $1$ for the quantity $\|T^n(I-T)\|$ when $T$ is a suitable power-bounded operator and $n\geq0$ is sufficiently large.  Section~\ref{sec:optimality_poly} then  investigates the optimality of these bounds in the case of polynomial resolvent growth. The two main results here show, respectively, that in this situation no tighter bounds may be found for operators on general Banach spaces (Theorem~\ref{log_optimality}) but that a stronger conclusion holds if the underlying space is assumed to be a Hilbert space (Theorem~\ref{hilbert_poly_decay}).

The notation used  throughout is as follows. Given a complex Banach space $X$, let $\B(X)$ stand for the algebra of bounded linear operators on $X$. An operator $T\in\B(X)$  is said to be power-bounded if $\sup\{\|T^n\|:n\geq 0\}<\infty$. Denote the range and kernel  of an operator $T\in\B(X)$ by  $\Ran(T)$ and $\Ker(T)$, respectively, and write $\Fix(T):=\Ker(I-T)$ for the set of fixed points of $T$, $\sigma(T)$ for its spectrum and $r(T)$  for its spectral radius. Furthermore, given an element $\lambda$ of the resolvent set  $\rho(T):=\CC\backslash\sigma(T)$, let $R(\lambda,T):=(\lambda-T)^{-1}$ denote the resolvent operator of $T$. All remaining pieces of notation will be introduced as the need arises.

\section{General results}\label{sec:main_rates}

Let $T\in\B(X)$ be a power-bounded operator on a complex Banach space $X$, and suppose that $\sigma(T)\cap\T=\{1\}$. In order to address the question of rates of decay in \eqref{KT}, it will be convenient to have in place a few non-standard pieces of notation. Thus, given an operator $T$ as above, a decreasing function $m:(0,\pi]\to(0,\infty)$   such that  $\|R(\e^{\ii \theta},T)\|\leq m(|\theta|)$ for all $\theta$ with $0<|\theta|\leq\pi$ will be said to be a \textsl{dominating function (for the resolvent of $T$)}. Likewise a decreasing function  $\omega:\ZZ_+\to(0,\infty)$  such that $\|T^n(I-T)\|\leq\omega(n)$ for all $n\in\ZZ_+$ will be said to be a \textsl{dominating function (for  $T$)}. The \textsl{minimal} dominating functions are given, for $\theta\in(0,\pi]$ and $n\geq0$, by 
\begin{equation}\label{min_df}\begin{aligned}
m(\theta)&=\sup\big\{\|R(\e^{\ii\vartheta},T)\|:\theta\leq|\vartheta|\leq\pi\big\},\\  
\omega(n)&=\sup\big\{\|T^k(I-T)\|:k\geq n\big\},
\end{aligned}\end{equation} 
respectively. Thus, for the minimal dominating function $\omega$ of $T$,  $\omega(n)\to0$ as $n\to\infty$  precisely when \eqref{KT} holds. Note also that the function $m$ defined in \eqref{min_df} is continuous. In what follows, the same will be assumed to be true of any dominating function $m$ for the resolvent of $T$. In particular, any such dominating function $m$ possesses a right-inverse $m^{-1}$ defined on the range of $m$. On the other hand, given a dominating function $\omega$ for $T$ which satisfies $\omega(n)\to0$ as $n\to\infty$, define the function $\omega^*:(0,\infty)\to\ZZ_+$ by \begin{equation}\label{w_inv}\omega^*(s):=\min\big\{n\in\ZZ_+:\omega(n)\leq s\big\}.\end{equation} Then $\omega(\omega^*(s))\leq s$ for all $s>0$, with equality for all $s$ in the range of $\omega$.

 Recall the elementary estimate \begin{equation}\label{resolvent_lb}\|R(\lambda,T)\|\geq\frac{1}{\dist(\lambda, \sigma(T))},\end{equation} which holds for all $\lambda\in\rho(T)$. Since $1\in\sigma(T)$, it follows that $m(\theta)\geq  \theta^{-1}$ for all $\theta\in(0,\pi]$. Thus there is a minimal rate at which the resolvent of any  operator $T$  as above must blow up near its singularity. This may suggest that there should exist a corresponding minimal rate, independent of the operator $T$, at which decay  takes place in \eqref{KT}. As Corollary~\ref{cor:lb} below will show, however, this is far from being the case; see also  \cite[Theorem~4.2]{AR}.  The next result, on the other hand,  shows that instances in which the decay is faster than that of $n^{-1}$ are of a very special nature. It is a direct analogue of \cite[Theorem~6.7]{BCT}; see also \cite[Remarks~2.3 and~2.4]{Nev3}.

 \begin{mythm}\label{max_decay}
 Let $X$ be a complex Banach space and let $T\in\B(X)$ be a power-bounded operator such that $\sigma(T)\cap\T=\{1\}$. Then either \begin{equation}\label{limsup_ritt}
 \limsup_{n\to\infty}n\|T^n(I-T)\|>0
 \end{equation} or there exist closed $T$-invariant subspaces $X_0$ and $X_1$ of $X$ such that   $X_0\subset\Fix(T)$, the restriction $T_1$ of $T$ to $X_1$ satisfies  $r(T_1)<1$ and $X=X_0\oplus X_1$.
 \end{mythm}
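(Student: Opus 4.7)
The plan is to prove the contrapositive: assume $\lim_{n\to\infty} n\|T^n(I-T)\| = 0$ and construct the direct-sum decomposition. The natural candidates are $X_0=\Ran P$ and $X_1=\Ker P$, where $P$ is the Riesz spectral projection of $T$ at $\{1\}$; provided that $1$ is an isolated point of $\sigma(T)$ and a first-order pole of $R(\cdot,T)$, the Riesz functional calculus supplies the required decomposition. Indeed, $X=X_0\oplus X_1$ is then a topological direct sum of closed $T$-invariant subspaces, $\sigma(T|_{X_1}) = \sigma(T)\setminus\{1\}$ is compact and, by $\sigma(T)\cap\T=\{1\}$, contained in the open unit disc so that $r(T_1)<1$, while first-order-ness means $(T-I)P=0$, i.e.\ $\Ran P\subset\Fix(T)$. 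The first-order condition is itself automatic from isolation together with power-boundedness, since a pole of order $m\geq 2$ would make $N:=(T-I)|_{X_0}$ nilpotent of order $m$ and the binomial expansion $(T|_{X_0})^n = \sum_{k=0}^{m-1}\binom{n}{k}N^k$ would then contain a leading term $\binom{n}{m-1}N^{m-1}$ of polynomial growth in $n$, contradicting $\sup_n\|T^n\|<\infty$.

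The crux is therefore to show that $1$ is isolated in $\sigma(T)$, and here I would argue by contradiction using approximate eigenvectors. If $1$ were not isolated, a short connectedness argument (the punctured ball $B(1,\delta)\setminus\{1\}$ is connected, meets $\rho(T)$ for every $\delta>0$ because $\sigma(T)\subset\overline{\DD}$ and $1\in\T$, and can be split into the disjoint relatively open subsets $\text{int}(\sigma(T))\cap B(1,\delta)\setminus\{1\}$ and $\rho(T)\cap B(1,\delta)$ when $\partial\sigma(T)\cap(B(1,\delta)\setminus\{1\})=\emptyset$) produces a sequence $\lambda_k\in\partial\sigma(T)$ with $\lambda_k\to 1$ and $\lambda_k\neq 1$. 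Since $\partial\sigma(T)\subset\sigma_{\mathrm{ap}}(T)$, these $\lambda_k$ are approximate eigenvalues; writing $\lambda_k = 1 - r_k e^{i\theta_k}$ with $r_k\downarrow 0$, the constraint $|\lambda_k|\leq 1$ forces $\cos\theta_k\geq r_k/2$, so in particular $\cos\theta_k>0$. Select unit approximate eigenvectors $x_k$ with $\|(T-\lambda_k)x_k\|$ as small as desired. A Duhamel iteration gives
\[
\|T^n x_k - \lambda_k^n x_k\| \;\leq\; \frac{M\,\|(T-\lambda_k)x_k\|}{1-|\lambda_k|},
\]
so with $n_k\asymp 1/(r_k\cos\theta_k)$ (whence $|\lambda_k|^{n_k}\asymp e^{-1}$) and the approximation chosen fine enough, one reads off
\[
n_k\|T^{n_k}(I-T)\| \;\geq\; n_k\|T^{n_k}(I-T)x_k\| \;\gtrsim\; n_k\,|\lambda_k|^{n_k}|1-\lambda_k| \;\asymp\; \frac{1}{e\cos\theta_k} \;\geq\; \frac{1}{e},
\]
in contradiction with the hypothesis.

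The main obstacle is calibrating $\|(T-\lambda_k)x_k\|$ so that the error term produced by the Duhamel iteration remains strictly subdominant at the critical iteration count $n_k$: this amounts to ensuring $\|(T-\lambda_k)x_k\|\ll r_k(1-|\lambda_k|)\asymp r_k^{2}\cos\theta_k$, which is possible precisely because $\lambda_k\in\sigma_{\mathrm{ap}}(T)$. The whole scheme is the discrete analogue of the argument behind \cite[Theorem~6.7]{BCT} for $C_0$-semigroups; compare also \cite[Remarks~2.3 and~2.4]{Nev3}.
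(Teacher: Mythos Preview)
Your argument for the isolated case contains a genuine gap. You claim that ``the first-order condition is itself automatic from isolation together with power-boundedness'', but your justification only shows that \emph{if} $1$ is a pole of $R(\cdot,T)$ then its order must be $1$; it does not exclude the possibility that $1$ is an essential singularity of the resolvent, i.e.\ that $N=(T-I)|_{X_0}$ is quasinilpotent without being nilpotent. In infinite dimensions this really occurs: there exist power-bounded operators $T_0$ with $\sigma(T_0)=\{1\}$ and $T_0\neq I$, and for any such operator the Riesz projection at $1$ is the identity while $T_0-I$ fails to be nilpotent. What is actually needed is the non-trivial result (due in various forms to Esterle and to Kalton--Montgomery-Smith--Oleszkiewicz--Tomilov; the paper invokes \cite[Theorem~2.2]{KMSOT}) that if $\sigma(T_0)=\{1\}$ and $\liminf_{n\to\infty} n\|T_0^n(I-T_0)\|=0$ then $T_0=I$. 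You do have this hypothesis available, since $\|T_0^n(I-T_0)\|\leq\|T^n(I-T)\|$, so the gap can be closed by citing that theorem --- but the conclusion does not follow from power-boundedness alone as you assert.

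Your treatment of the non-isolated case via boundary points and approximate eigenvectors is correct but considerably more laboured than necessary. The paper simply observes that for any $\lambda_j\in\sigma(T)\setminus\{1\}$ with $\lambda_j\to1$, the spectral mapping theorem for polynomials gives $\lambda_j^n(1-\lambda_j)\in\sigma(T^n(I-T))$ and hence $|\lambda_j|^{n_j}|1-\lambda_j|\leq r(T^{n_j}(I-T))\leq\|T^{n_j}(I-T)\|$; with $n_j=\lfloor|1-\lambda_j|^{-1}\rfloor$ this already yields $\limsup_{n\to\infty} n\|T^n(I-T)\|\geq e^{-1}$. No approximate eigenvectors, no Duhamel estimate, no calibration of error terms, and no need to arrange that $\lambda_j\in\partial\sigma(T)$.
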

 
 \begin{proof}[\textsc{Proof}] Supposing first that 1 is a limit point of $\sigma(T)$, let $\lambda_j\in\sigma(T)\backslash\{1\}$ be such that $\lambda_j\to1$ as $j\to\infty$ and set $n_j:=\lfloor|1-\lambda_j|^{-1}\rfloor$. Since $r(T^n(I-T))\leq\|T^n(I-T)\|$ for all $n\geq1$, it follows that $$ \limsup_{n\to\infty}n\|T^n(I-T)\|\geq \lim_{j\to\infty}\frac{n_j}{n_j+1}\left(1-\frac{1}{n_j}\right)^{n_j}=\e^{-1},$$and hence \eqref{limsup_ritt} holds.
 
If 1 is an isolated point of $\sigma(T)$, on the other hand, then  a standard spectral decomposition argument (see for instance \cite[Proposition~B.9]{Green_book}) shows that there exist closed $T$-invariant subspaces $X_0$ and $X_1$ of $X$ and a bounded projection $P$ of $X$ onto $X_1$ along $X_0$ which commutes with $T$.  In particular, $X=X_0\oplus X_1$. Moreover, the restrictions $T_0$ and $T_1$ of $T$ to $X_0$ and $X_1$ satisfy  $\sigma(T_0)=\{1\}$ and $\sigma(T_1)=\sigma(T)\backslash\{1\}$, respectively. Now, if \eqref{limsup_ritt} fails, then $$\liminf_{n\to\infty}n\|T_0^n(I-T_0)\|=0$$ and it follows from \cite[Theorem~2.2]{KMSOT} that $Tx=x$ for all $x\in X_0$, as required.
\end{proof}

\begin{rem}
It is easily seen that, if  $X$ splits, then in fact $X_0=\Fix(T)$  and $X_1=\Ran(I-T)$. In particular, $\Ran(I-T)$ is closed; see also \cite[Theorem~4.4.2]{Nev}.
\end{rem}

Thus $\|T^n(I-T)\|$ decays either at least exponentially as $n\to\infty$ or at a rate no faster than  $n^{-1}$.  The case of decay at this borderline rate turns out to be connected with a special class of operators. Recall that an operator $T$ on a complex Banach space $X$ is said to be a \textsl{Ritt operator} if $\sigma(T)\cap\T=\{1\}$ and there exists a constant $C>0$ such that \begin{equation}\label{Ritt}\|R(\lambda,T)\|\leq\frac{C}{|1-\lambda|}\end{equation} for $|\lambda|>1$; various interesting results on Ritt operators may be found for instance in \cite{ALM}, \cite{Blu2}, \cite{Blu},  \cite{Dun3}, \cite{Dun4}, \cite{Dun2}, \cite{EFR}, \cite{LLM}, \cite{LM}, \cite{LMX} and  \cite{Vit}. The reason why Ritt operators are  important in the present context is that a power-bounded operator $T$ satisfying $\sigma(T)\cap\T=\{1\}$ is a Ritt operator if and only if  $\|R(\e^{\ii\theta},T)\|=O(|\theta|^{-1})$ as $|\theta|\to0$; see for instance  the proof of Lemma~\ref{resolvent_lem} below. The following result shows that these operators are precisely those for which the rate of decay in \eqref{KT} is no slower than $n^{-1}$. This characterisation was obtained independently in \cite{Lyu2} and \cite{NZ}; see also \cite[Theorem~4.5.4]{Nev}.

\begin{mythm}\label{Ritt_thm}
Let $X$ be a complex Banach space. An operator $T\in\B(X)$ is a Ritt operator if and only if it is power-bounded and $\|T^n(I-T)\|=O(n^{-1})$ as $n\to\infty$.
\end{mythm}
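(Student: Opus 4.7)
The plan is to prove the two implications separately. For $(\Rightarrow)$ I use the Dunford-Riesz functional calculus with repeated integration by parts, while for $(\Leftarrow)$ I work with the Neumann series expansion of the resolvent.

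$(\Rightarrow)$: Assume $T$ is a Ritt operator. Power-boundedness of $T$ is not immediate from the resolvent bound alone, but it follows from a Cauchy integral argument: one estimates $T^n = (2\pi\ii)^{-1}\oint_\Gamma \lambda^n R(\lambda,T)\,\dd\lambda$ on a contour $\Gamma$ tailored to the Stolz-angle localization of $\sigma(T)$ which the Ritt hypothesis forces. Granting this, I work with the representation
\[
T^n(I-T) = \frac{1}{2\pi\ii}\oint_{\Gamma_n}\lambda^n(1-\lambda)R(\lambda,T)\,\dd\lambda
\]
on $\Gamma_n = \{|\lambda|=1+1/n\}$. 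Two integrations by parts, using $\dd(\lambda^{n+k+1}/(n+k+1)) = \lambda^{n+k}\,\dd\lambda$ together with $R'(\lambda,T) = -R(\lambda,T)^2$, make the boundary terms vanish on the closed contour and produce
\[
T^n(I-T) = \frac{2T^{n+1}}{n+1} + \frac{2}{(n+1)(n+2)(2\pi\ii)}\oint_{\Gamma_n}\lambda^{n+2}(1-\lambda)R(\lambda,T)^3\,\dd\lambda.
\]
The first term is $O(1/n)$ by power-boundedness. For the remainder, the iterated Ritt estimate $|1-\lambda|\,\|R(\lambda,T)^3\|\leq C^3/|1-\lambda|^2$ combined with $\int_{\Gamma_n}|1-\lambda|^{-2}\,|\dd\lambda| = O(n)$ and $|\lambda|^{n+2}\leq 4\e$ on $\Gamma_n$ gives a contribution of order $1/n$.

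$(\Leftarrow)$: Assume $T$ is power-bounded with $\|T^n(I-T)\| = O(n^{-1})$. Theorem~\ref{KT_classical} gives $\sigma(T)\cap\T\subset\{1\}$, so $R(\lambda,T)$ is analytic on $\{|\lambda|\geq 1\}\setminus\{1\}$. Using the identity $(I-T)R(\lambda,T) = I-(\lambda-1)R(\lambda,T)$, the Ritt bound reduces to showing $\|(I-T)R(\lambda,T)\|\leq C$ uniformly for $|\lambda|>1$ close to $1$; for $\lambda$ bounded away from $1$ this is automatic by analyticity and decay of $R(\lambda,T)$ at infinity. I expand in the Neumann series
\[
(I-T)R(\lambda,T) = \sum_{k=0}^\infty \lambda^{-k-1}T^k(I-T),
\]
and combine the decay $\|T^k(I-T)\|\leq K/k$ with the bounded partial sums $\sum_{k=0}^N T^k(I-T) = I-T^{N+1}$ (bounded by $1+\sup_n\|T^n\|$); a suitably iterated summation by parts then extracts the required uniform estimate.

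The main obstacle is the $(\Leftarrow)$ direction: a naive term-by-term estimate or single Abel summation only yields a bound with a spurious logarithmic factor in $1/(|\lambda|-1)$, and obtaining the sharp $O(1)$ bound requires a more delicate argument that simultaneously exploits the decay of the coefficients and the cancellation already present in the bounded partial sums.
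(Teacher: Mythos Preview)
The paper does not prove Theorem~\ref{Ritt_thm}; it is quoted as a known result due independently to Lyubich and to Nagy--Zem\'anek, so there is no in-paper argument to compare against directly.

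Your $(\Rightarrow)$ direction is sound: the double integration by parts on the circle $|\lambda|=1+1/n$ is a correct route to $\|T^n(I-T)\|=O(n^{-1})$ once power-boundedness is available, and your hand-wave for the latter points to the right Stolz-angle contour argument.

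The $(\Leftarrow)$ direction, however, has a genuine gap, and the fix you gesture at goes in the wrong direction. Iterated Abel summation does not help: a second summation by parts replaces the bounded partial sums $A_k=I-T^{k+1}$ by their cumulative sums $\sum_{j\le k}A_j$, which grow linearly in $k$, and the resulting estimate carries the factor $\big(|1-\lambda|/(|\lambda|-1)\big)^2$ --- worse than the single-Abel bound, not better. The logarithmic loss you identify is genuine for either ingredient taken alone, and no iteration of Abel removes it.

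The missing idea is an \emph{absorption} trick rather than a summation trick. Truncate the Neumann series at level $n$,
\[
(I-T)R(\lambda,T)=\sum_{k=0}^{n-1}\lambda^{-k-1}T^k(I-T)+\lambda^{-n}T^n(I-T)R(\lambda,T),
\]
and choose $n=\lceil 2K/|1-\lambda|\rceil$. The finite head is $O(1)$ by a \emph{single} Abel step (bounded $A_k$ together with $n|1-\lambda|=O(1)$), while the tail has norm at most $(K/n)\|R(\lambda,T)\|\le\tfrac12|1-\lambda|\,\|R(\lambda,T)\|$. Feeding this into $(\lambda-1)R(\lambda,T)=I-(I-T)R(\lambda,T)$ and moving the tail term to the left yields $\|R(\lambda,T)\|\le C/|1-\lambda|$ directly for all $|\lambda|>1$ near $1$. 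This is precisely the computation carried out in the paper's Theorem~\ref{thm2}, specialised to $\omega(n)=K/n$.
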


Thus decay in \eqref{KT} at a rate no slower than that of $n^{-1}$ already implies a strong condition on the growth of the resolvent near its singularity at $1$.  The next result establishes a corresponding resolvent bound in a rather more general situation; see \cite[Theorem~6.10]{BCT} for an analogous result in the setting of $C_0$-semigroups.

\begin{mythm}\label{thm2}
Let $X$ be a complex Banach space and let  $T\in \B(X)$ be  a power-bounded operator. Suppose that $\omega$ is a dominating function for $T$ such that $\omega(n)\to0$ as $n\to\infty$, and let $\omega^*$ be as defined in \eqref{w_inv}. Then  $\sigma(T)\cap\T\subset \{1\}$ and, for any $c\in(0,1)$,  \begin{equation}\label{eq2}\|R(\e^{\ii\theta},T)\|=O\left( \frac{1}{|\theta|}+\omega^*\left(c|\theta|\right)\right)\end{equation} as $|\theta|\to0$.
 \end{mythm}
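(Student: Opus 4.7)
The first conclusion is immediate from the classical Katznelson--Tzafriri theorem (Theorem~\ref{KT_classical}): since $\|T^n(I-T)\|\leq\omega(n)\to0$ and $T$ is power-bounded, \eqref{KT} holds, whence $\sigma(T)\cap\T\subset\{1\}$. For the resolvent bound, my strategy is to write down a single algebraic identity for $R(\lambda,T)$ in which $T^N(I-T)$ appears explicitly, so the dominating function $\omega$ can be brought to bear, and which upon inversion delivers the sharp two-scale estimate $1/|\theta|+\omega^*(c|\theta|)$.

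I would start from the elementary finite Neumann-type identity $(I-\lambda^{-N}T^N)R(\lambda,T)=\sum_{j=0}^{N-1}\lambda^{-j-1}T^j$, valid for every $\lambda\in\rho(T)$ and $N\geq1$, and then replace the term $\lambda^{-N}T^N R(\lambda,T)$ using $T^N R(\lambda,T)=(\lambda-1)^{-1}\bigl[T^N-T^N(I-T)R(\lambda,T)\bigr]$, the latter a direct consequence of $R(\lambda,T)(I-T)=I-(\lambda-1)R(\lambda,T)$. After rearrangement one arrives at the key identity
\[
\left[I+\frac{\lambda^{-N}T^N(I-T)}{\lambda-1}\right]R(\lambda,T)=\sum_{j=0}^{N-1}\lambda^{-j-1}T^j+\frac{\lambda^{-N}T^N}{\lambda-1}.
\]

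Given $c\in(0,1)$, I would fix any $c'\in(c,1)$ and restrict to $|\theta|$ small enough that $|\e^{\ii\theta}-1|\geq c'|\theta|$; this is legitimate since $|\e^{\ii\theta}-1|/|\theta|\to1$ as $\theta\to0$. With $\lambda:=\e^{\ii\theta}$ and $N:=\omega^*(c|\theta|)$, the defining property of $\omega^*$ gives $\|T^N(I-T)\|\leq c|\theta|$, so the perturbation term on the left of the displayed identity has norm at most $c/c'<1$, and a Neumann series bounds the inverse of the bracketed operator by $(1-c/c')^{-1}$. The right-hand side has norm at most $MN+M/(c'|\theta|)$, where $M:=\sup_n\|T^n\|$, so combining yields $\|R(\e^{\ii\theta},T)\|=O\bigl(1/|\theta|+\omega^*(c|\theta|)\bigr)$ as $|\theta|\to0$.

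I expect the main obstacle to be locating the correct algebraic identity. The na\"ive Neumann series $R(\lambda,T)=\sum_n\lambda^{-n-1}T^n$ fails to converge on $\T$, and splitting its partial sums at the cutoff $N=\omega^*(c|\theta|)$ in the obvious way produces only the weaker bound $O\bigl((1+\omega^*(c|\theta|))/|\theta|\bigr)$. Inserting the factor $(I-T)$ into the remainder, as above, is the decisive manoeuvre: it allows the smallness $\omega(N)\leq c|\theta|$ to absorb the small denominator $|\lambda-1|\sim|\theta|$ exactly once, producing the correct separation of the two scales $1/|\theta|$ and $\omega^*(c|\theta|)$.
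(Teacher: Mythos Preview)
Your proof is correct and follows essentially the same route as the paper. The identity you derive is, after multiplying through by $\lambda^N(1-\lambda)$, exactly the paper's identity $\lambda^n(1-\lambda)-T^n(I-T)=(1-\lambda)\lambda^{n-1}\sum_{k=0}^{n-1}\lambda^{-k}T^k(\lambda-T)-T^n(\lambda-T)$ applied to $R(\lambda,T)$; the paper then rearranges the resulting scalar inequality $|1-\lambda|\,\|R(\lambda,T)x\|\leq\omega(n)\|R(\lambda,T)x\|+M(1+n|1-\lambda|)\|x\|$ directly rather than phrasing it as a Neumann-series perturbation, but this is only a cosmetic difference. The one minor divergence is that the paper obtains $\sigma(T)\cap\T\subset\{1\}$ by the elementary spectral-mapping argument $|1-\lambda|\leq r(T^n(I-T))\leq\omega(n)\to0$, whereas you invoke the full Katznelson--Tzafriri theorem; your route is valid but heavier than necessary.
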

 
\begin{proof}[\textsc{Proof}] Suppose that $\lambda \in\sigma(T)\cap\T$. By the spectral mapping theorem for polynomials, $\lambda^n(1-\lambda)\in\sigma(T^n(I-T))$  and hence $|1-\lambda|\leq\omega(n)$ for all $n\geq0$.  Letting $n\to\infty$, it follows that $\lambda=1$, so $\sigma(T)\cap\T\subset\{1\}$. 

Now let $\lambda\in \T\backslash\{1\}$. Then, for $n\geq0$, $$\begin{aligned}\lambda^n(1-\lambda) -T^n(I-T)&= (1-\lambda)\lambda^{n-1}\sum_{k=0}^{n-1}\lambda^{-k}T^k(\lambda-T)-T^n(\lambda-T)\end{aligned}$$ and hence, letting $M:=\sup\{\|T^n\|:n\geq 0\}$, $$|1-\lambda| \|R(\lambda,T)x\|\leq \omega(n)\|R(\lambda,T)x\|+M\big(1+n|1-\lambda|\big)\|x\|$$  for all  $x\in X$. Fix $b\in (c,1)$  and  let  $n=\omega^*(b|1-\lambda|)$. Then $$\|R(\lambda,T)\|\leq \frac{M}{1-b}\left(\frac{1}{|1-\lambda|}+\omega^*\left(b|1-\lambda|\right)\right)$$ and,  since $b|1-\lambda|\geq c|\theta|$ whenever $\lambda=\e^{\ii\theta}$ for some sufficiently small $\theta\in(-\pi,\pi]\backslash\{0\}$, the result follows.
\end{proof}

\begin{rem}\label{lb_rem}
A similar argument shows that, given any constant $K>M$, where $M$ is as above, there exists $c\in(0,1)$ such that \begin{equation}\label{eq}\|R(\e^{\ii\theta},T)\|\leq K\left( \frac{1}{|\theta|}+\omega^*\left(c|\theta|\right)\right)\end{equation} whenever $|\theta|$ is sufficiently small.  Note also that, by \eqref{resolvent_lb}, the $|\theta|^{-1}$ term in \eqref{eq2} and \eqref{eq} cannot in general be omitted.  
\end{rem}

In analogy with \cite[Corollary~6.11]{BCT}, these observations can be used to obtain a lower bound on the quantity $\|T^n(I-T)\|$ when $n\geq0$ is large.

\begin{mycor}\label{cor:lb}
Let $X$ be a complex Banach space, let $T\in\B(X)$ be a power-bounded operator such that  $\sigma(T)\cap\T=\{1\}$ and  let  $m$ be the minimal dominating function for the resolvent of $T$ defined in \eqref{min_df}. Suppose that \begin{equation}\label{L}\lim_{\theta\to0}\max\big\{\|\theta R(\e^{\ii \theta},T)\|,\|\theta R(\e^{-\ii \theta},T)\|\big\}=\infty.\end{equation} Then, given any right-inverse $m^{-1}$ of $m$, there exist constants $c,C>0$  such that \begin{equation}\label{KT_lb}\|T^n(I-T)\|\geq c m^{-1}(Cn)\end{equation} for all sufficiently large $n\geq0$.
\end{mycor}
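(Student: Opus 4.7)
The plan is to derive \eqref{KT_lb} by inverting the sharpened resolvent estimate from Theorem~\ref{thm2} and Remark~\ref{lb_rem}. To set up, fix $K>M:=\sup_{n\geq 0}\|T^n\|$ and take $\omega$ to be the minimal dominating function for $T$ defined in \eqref{min_df}. Since $\sigma(T)\cap\T=\{1\}$, Theorem~\ref{KT_classical} ensures $\omega(n)\to 0$ as $n\to\infty$, so $\omega^*$ from \eqref{w_inv} is well-defined and satisfies $\omega^*(s)\to\infty$ as $s\to 0^+$. Remark~\ref{lb_rem} then provides $c_0\in(0,1)$ such that
\begin{equation*}
m(\theta)\;\leq\;K\!\left(\frac{1}{\theta}+\omega^*(c_0\theta)\right)
\end{equation*}
for all sufficiently small $\theta>0$.

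I next use hypothesis~\eqref{L} to absorb the $\theta^{-1}$ term. From the definition of $m$ in \eqref{min_df} one has $m(\theta)\geq\max\{\|R(\e^{\ii\theta},T)\|,\|R(\e^{-\ii\theta},T)\|\}$, so \eqref{L} yields $\theta m(\theta)\to\infty$ as $\theta\to 0^+$. Hence $K\theta^{-1}\leq m(\theta)/2$ for every sufficiently small $\theta>0$, and inserting this into the previous display produces
\begin{equation*}
m(\theta)\;\leq\;2K\,\omega^*(c_0\theta).
\end{equation*}

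The proof is completed by inverting this relation. Given $n\in\ZZ_+$ large, set $\theta_n:=m^{-1}(2K(n+1))$, which is positive and small because $m^{-1}(y)\to 0$ as $y\to\infty$. Applying the previous inequality at $\theta_n$ gives $\omega^*(c_0\theta_n)\geq m(\theta_n)/(2K)=n+1$; by the very definition of $\omega^*$ this forces $\omega(n)>c_0\theta_n=c_0 m^{-1}(2K(n+1))$, and since $m^{-1}$ is decreasing one has $c_0 m^{-1}(2K(n+1))\geq c_0 m^{-1}(4Kn)$ for $n\geq 1$. Because $\omega$ encodes the decay of $\|T^n(I-T)\|$ through $\omega(n)=\sup_{k\geq n}\|T^k(I-T)\|$, the desired bound \eqref{KT_lb} emerges with $c:=c_0$ and $C:=4K$. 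I expect the main obstacle to be the second step: the lower bound \eqref{resolvent_lb} shows the $\theta^{-1}$ term in the estimate from Remark~\ref{lb_rem} cannot simply be dropped in general, so the divergence in \eqref{L} has to be used in an essential way, and the fact that \eqref{min_df} takes a supremum over both signs of the argument is exactly what turns the one-sided hypothesis \eqref{L} into the two-sided estimate for $m$ that the inversion requires.
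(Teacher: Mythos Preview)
Your argument is essentially correct and follows the same strategy as the paper --- invert the resolvent bound coming from Theorem~\ref{thm2}/Remark~\ref{lb_rem} --- though you run the inversion in the opposite direction: you fix $\theta_n=m^{-1}(2K(n+1))$ and deduce a lower bound on $\omega(n)$, whereas the paper sets $\theta_n=2\omega(n)$ and deduces $m^{-1}(Cn)<\theta_n$. Both are equally natural.

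There is, however, one genuine slip at the very end. From $\omega(n)>c_0\,m^{-1}(4Kn)$ you conclude \eqref{KT_lb} with $c=c_0$, but $\omega(n)=\sup_{k\geq n}\|T^k(I-T)\|\geq\|T^n(I-T)\|$, so the inequality points the wrong way for that deduction. The missing step --- which the paper makes explicit --- is that power-boundedness gives, for $k\geq n$,
\[
\|T^k(I-T)\|=\|T^{k-n}T^n(I-T)\|\leq M\,\|T^n(I-T)\|,
\]
whence $\omega(n)\leq M\,\|T^n(I-T)\|$. Combining this with your bound yields \eqref{KT_lb} with $c=c_0/M$ and $C=4K$.
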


\begin{proof}[\textsc{Proof}] 
Let $\omega$ be as defined in \eqref{min_df}.  Since $\omega(n)\to0$ as $n\to\infty$ by Theorem~\ref{KT_classical}, it follows from Theorem~\ref{thm2} that there exists $B>0$  such that $m(\theta)\leq  B\left( \theta^{-1}+\omega^*\left(\theta/2\right)\right)$ for all sufficiently small $\theta\in(0,\pi]$, and hence     \begin{equation}\label{w_m}\omega^*(\theta/2)\geq m(\theta)\left(\frac{1}{B}-\frac{1}{\theta m(\theta)}\right)\end{equation} for all such values of $\theta$. Let $C:=2B$ and, for $n\geq0$,  let $\theta_n:=2\omega(n)$. By  \eqref{L}, $\theta_n m(\theta_n)>C$ for all sufficiently large  $n\geq0$, so  \eqref{w_m} implies that $\omega^*(\theta_n/2)> C^{-1}m(\theta_n)$ for each such $n\geq0$. Since $\omega^*(\theta_n/2)\leq n$ and therefore $$m(m^{-1}(Cn))=Cn\geq C\omega^*(\theta_n/2)>m(\theta_n),$$ it follows that  $m^{-1}(Cn)<\theta_n$ for all sufficiently large $n\geq0$. Moreover, $\theta_n\leq2 M\|T^n(I-T)\|$ for all $n\geq0$, where $M:=\sup\{\|T^n\|:n\geq 0\}$, which shows that  \eqref{KT_lb} holds for $c=(2M)^{-1}$.
\end{proof}

\begin{rem}
A similar argument using Remark~\ref{lb_rem} instead of Theorem~\ref{thm2} shows that the conclusion \eqref{KT_lb} remains true if \eqref{L} is replaced by the weaker condition that $L>M$, where $M$ is as  above  and $$L:=\liminf_{\theta\to0}\max\big\{\|\theta R(\e^{\ii \theta},T)\|,\|\theta R(\e^{-\ii \theta},T)\|\big\}.$$ Taking $T$ to be the identity operator shows that the conclusion can be false when $L=M$.
\end{rem}

Suppose that $T$ is a power-bounded operator such that $\sigma(T)\cap\T=\{1\}$ and let $m$ be the minimal dominating function for the resolvent of $T$ defined in \eqref{min_df}. If $T$ is  a Ritt operator, then it follows from Theorem~\ref{Ritt_thm} that, for any $c\in(0,1)$,  $$\|T^n(I-T)\|=O\big(m^{-1}(cn)\big)$$ as $n\to\infty$ and, in view of Corollary~\ref{cor:lb}, this type of upper bound is in general the best one can hope for. The next result describes the class of functions $m$ for which such an upper bound is satisfied in the case of a normal operator on a Hilbert space; see also \cite[Proposition~6.13]{BCT}.

\begin{myprp}\label{normal_thm}
Let $X$ be a complex Hilbert space and let $T\in\B(X)$ be a power-bounded normal operator such that $\sigma(T)\cap\T=\{1\}.$
Furthermore, let $m$ be the minimal dominating function for the resolvent of $T$ defined in \eqref{min_df}, let $m^{-1}$ be any right-inverse of $m$ and let  $S\subset\NN$. 
\begin{enumerate}
\item Suppose there exist constants $c,C>0$ such that 
\begin{equation}\label{m_decay}\|T^n(I-T)\|\leq C m^{-1}(c n)\end{equation} for all $n\in S$. Then, for any $b\in (0,c)$, there exists a constant $B>0$ such that  \begin{equation}\label{log_m}\frac{m(\theta)}{m(\vartheta)}\geq b \log\frac{\vartheta}{\theta}-B,\end{equation} for all $\theta\in (0,\pi]$ of the form $\theta=m^{-1}(cn)$ with $n\in S$ and for all sufficiently small $\vartheta\in (0,\pi]$ . 
\item Conversely, if there exist constants $b,B>0$ such that \eqref{log_m} holds for all $\theta\in (0,\pi]$ of the form $\theta=m^{-1}(bn)$ with $n\in S$ and all  $\vartheta\in(0,\pi]$, then there exists a constant $C>0$ such that \eqref{m_decay} holds with $c=b$.
\end{enumerate}
 \end{myprp}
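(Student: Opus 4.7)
The normal assumption and the spectral theorem give the exact formulas $\|T^n(I-T)\|=\sup_{\lambda\in\sigma(T)}|\lambda|^n|1-\lambda|$ and $m(\theta)=1/\delta(\theta)$, where $\delta(\theta):=\inf\{\dist(\e^{\ii\vartheta},\sigma(T)):\theta\leq|\vartheta|\leq\pi\}$, so both parts of the proposition reduce to purely geometric statements about $\sigma(T)$ near $1$. Throughout I would restrict to spectral points $\lambda=r\e^{\ii\phi}$ with $r\geq\tfrac12$, since the contribution from points with $r<\tfrac12$ is $O(2^{-n})$ and so is easily absorbed into any target bound of the form $Cm^{-1}(bn)\geq C(bn)^{-1}$.

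For Part~(2) the starting point is the pointwise estimate $|\lambda|^n|1-\lambda|\leq\e^{-n(1-r)}((1-r)+|\phi|)$, coupled with the key inequality $1-r\geq 1/m(|\phi|)$, which holds because $\lambda=r\e^{\ii\phi}\in\sigma(T)$ forces $\dist(\e^{\ii\phi},\sigma(T))\leq 1-r$. If $1-r\leq|\phi|$, the target reduces to controlling $\sup_{\vartheta>0}\vartheta\,\e^{-n/m(\vartheta)}$, and exponentiating the hypothesis \eqref{log_m} at $\theta=m^{-1}(bn)$ yields $\vartheta\,\e^{-n/m(\vartheta)}\leq\e^{B/b}m^{-1}(bn)$ for every $\vartheta>0$. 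If instead $1-r>|\phi|$, then $|\lambda|^n|1-\lambda|\leq 2r^n(1-r)\leq 2/(\e n)$, which is dominated by $m^{-1}(bn)$ since $m(\theta)\geq\theta^{-1}$ forces $m^{-1}(bn)\geq(bn)^{-1}$.

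For Part~(1), fix $n\in S$ and set $\theta_n=m^{-1}(cn)$; the target unpacks to $\delta(\vartheta)\geq A:=[b\log(\vartheta/\theta_n)-B]/(cn)$, which is vacuous unless $\vartheta>\e^{B/b}\theta_n$. For each $\lambda=r\e^{\ii\phi}\in\sigma(T)$ I would bound from below the distance from $\lambda$ to the arc $\{\e^{\ii\eta}:\vartheta\leq|\eta|\leq\pi\}$. When $|\phi|<\vartheta/2$ the elementary estimate $|\e^{\pm\ii\vartheta}-r\e^{\ii\phi}|\gtrsim\vartheta$ suffices, because a direct maximization shows $A/\vartheta\leq b\e^{-1-B/b}$ via $cn\theta_n\geq 1$, which stays below the implicit geometric constant once $B$ is large enough. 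When $|\phi|\geq\vartheta/2$ the distance equals $1-r$, and combining $r^n|1-\lambda|\leq C\theta_n$ with $|1-\lambda|\gtrsim|\phi|$ yields $r^n\leq D_0\theta_n/\vartheta$ for some constant $D_0$.

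The main obstacle is then to extract a nearly optimal lower bound on $1-r$ from $r^n\leq D_0\theta_n/\vartheta$: the crude estimate $-\log r\leq 2(1-r)$ valid on $[\tfrac12,1]$ would deliver only the factor $\tfrac12$, so instead I would use the second-order expansion $1-\e^{-u}\geq u(1-u/2)$ evaluated at $u=\log(\vartheta/(D_0\theta_n))/n$, which gives $n(1-r)\geq(1-\epsilon/2)(\log(\vartheta/\theta_n)-\log D_0)$ for any fixed $\epsilon>0$, provided $u\leq\epsilon$. Since $\theta_n\geq(cn)^{-1}$ one has $\log(\vartheta/\theta_n)\leq\log(\pi cn)=O(\log n)$, so $u\to 0$ uniformly in $\vartheta\in(0,\pi]$ as $n\to\infty$; the sharp expansion therefore applies for all $n\in S$ beyond some threshold, and the remaining finitely many $n\in S$ are absorbed into $B$. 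Choosing $\epsilon\in(0,2(1-b/c))$, permissible precisely because $b<c$, gives $c(1-\epsilon/2)\geq b$ and hence the required log bound.
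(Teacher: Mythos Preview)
Your proof is correct and follows essentially the same strategy as the paper's: both reduce via the spectral theorem to the identities $\|T^n(I-T)\|=\sup_{\lambda\in\sigma(T)}|\lambda|^n|1-\lambda|$ and $m(\vartheta)^{-1}=\delta(\vartheta)$, exploit the key inequality $1-r\ge 1/m(|\phi|)$, split into an ``angular'' and a ``radial'' case, and sharpen the comparison between $1-r$ and $\log(1/r)$ near $r=1$ to capture any $b<c$. The only differences are cosmetic---the paper uses the monotone function $g(s)=(s-1)/\log s$ where you use the Taylor bound $1-e^{-u}\ge u(1-u/2)$ combined with $u=O(\log n/n)$, and the paper splits on $|1-\lambda|\gtrless\vartheta/2$ rather than on $|\phi|\gtrless\vartheta/2$.
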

 
 \begin{proof}[\textsc{Proof}] 
Note first that, for $\theta\in(0,\pi]$, $$m(\theta)^{-1}=\min\big\{|\lambda-\e^{\ii\varphi}|:\lambda\in\sigma(T), \theta\leq|\varphi|\leq\pi\big\},$$ and that \eqref{m_decay} is equivalent to having 
 \begin{equation}\label{equiv_dec} n\log\frac{1}{|\lambda|}\geq\log\frac{|1-\lambda|}{C m^{-1}(cn)}\end{equation}
for all $\lambda\in\sigma(T)\backslash\{1\}$ and all  $n\in S$.

Suppose this holds and let  $\theta=m^{-1}(cn)$ for some  $n\in S$. Then $$m(\theta)\geq \frac{c}{\log\frac{1}{|\lambda|}}\log\frac{|1-\lambda|}{C\theta}$$ for all $\lambda\in\sigma(T)\backslash\{1\}$.  Define the function $g:(0,1)\to\RR$ by $g(s):=\frac{s-1}{\log s}.$  Then $g$  is a  continuous increasing function satisfying $g(s)\to1$ as $s\to1$, and in fact  $$g(s)=\inf\left\{\frac{r-1}{\log r}:s< r<1\right\}$$ for all $s\in(0,1)$. Thus, given any  $b\in(0,c)$, there exists $s_0\in(0,1)$ such that $cg(s_0)>b$. Now suppose that $\vartheta\in(0,1-s_0)$, let $\lambda\in\sigma(T)$ be such that $m(\vartheta)=|\lambda-\e^{\ii\varphi}|^{-1}$ for some $\varphi\in(0,\pi]$ with $|\varphi|\geq\vartheta$, and let  $r:=|\lambda|$. Since  $m(\vartheta)\geq\vartheta^{-1}$,  it follows from the estimate $1-r\leq|\lambda-\e^{\ii\varphi}|$ that $r> s_0$. Thus, if $|1-\lambda|\geq\frac{\vartheta}{2}$, then $$\frac{m(\theta)}{m(\vartheta)}\geq \frac{c|\lambda-\e^{\ii\varphi}|}{\log\frac{1}{r}}\log\frac{|1-\lambda|}{C\theta}\geq b\log\frac{\vartheta}{2C\theta},$$ which gives \eqref{log_m} with $B=b\log 2C.$ If $|1-\lambda|<\frac{\vartheta}{2}$, on the other hand, then $|\lambda-\e^{\ii\varphi}|\geq\frac{\vartheta}{3}$ and hence $$\frac{m(\theta)}{m(\vartheta)}\geq \frac{\vartheta}{3\theta}\geq b\log\left(\frac{\theta}{3b\vartheta}\right) ,$$ which gives \eqref{log_m} with  $B=b\log 3b$. Thus, taking $B=b\max\{\log 2C,\log 3b\}$,  the proof the first statement is complete.

Now suppose, conversely, that \eqref{log_m} holds for some constants $b,B>0$,  all $\theta\in (0,\pi]$ of the form $\theta=m^{-1}(cn)$ with $n\in S$ and all  $\vartheta\in(0,\pi]$. Let $\lambda=r\e^{\ii\phi}\in\sigma(T)\backslash\{1\}$, and set $\vartheta:=|\phi|$, so that  $$\log\frac{1}{r}\geq 1-r=|\e^{\ii\phi}-\lambda|\geq\frac{1}{m(\vartheta)}.$$ Hence, if $\vartheta\geq\frac{1}{2}|1-\lambda|$, then  \eqref{log_m} gives $$n\log\frac{1}{r}\geq\frac{1}{b}\frac{m(m^{-1}(bn))}{m(\vartheta)}\geq\log\left(\frac{\vartheta}{m^{-1}(bn)}\right)-\frac{B}{b}\geq\log\left(\frac{|1-\lambda|}{2m^{-1}(bn)}\right)-\frac{B}{b},$$  thus establishing \eqref{equiv_dec} with $c=b$ and $C=2\e^{B/b}$. On the other hand, if $\vartheta<\frac{1}{2}|1-\lambda|$, then $1-r\geq\frac{1}{2}|1-\lambda|$ and consequently $$n\log\frac{1}{r}\geq n(1-r)\geq\frac{|1-\lambda|}{2b}m(m^{-1}(bn))\geq \frac{|1-\lambda|}{2bm^{-1}(bn)}\geq\log\left(\frac{|1-\lambda|}{2bm^{-1}(bn)}\right),$$ which gives \eqref{equiv_dec} with $c=b$ and $C=2b$. Thus taking $C=2\max\{\e^{B/b},b\}$ finishes  the proof.
 \end{proof}
 
 \begin{rem}\label{normal_rem}
 The result remains true, with the same proof, for any complex Banach space $X$ and any power-bounded operator $T\in\B(X)$ satisfying  $$\| f(T)\|=\sup\big\{|f(\lambda)|:\lambda\in\sigma(T)\big\}$$ for all functions $f$ of the form $f(\lambda)=\lambda^n(1-\lambda)$ with $n\geq0$ or $f(\lambda)=(\mu-\lambda)^{-1}$ with $\mu\in\rho(T)$. This includes, in particular, the class of multiplication operators on any of the classical function or sequence spaces. Note also that the second of the two implications holds more generally when $m$ is an arbitrary dominating function for the resolvent of $T$.
 \end{rem}
 
 Thus \eqref{m_decay} holds for a normal operator $T$ if and only if the minimal dominating function $m(\theta)$ for the resolvent of $T$ grows in a fairly regular way as  $\theta\to0$. The following example exhibits a class of normal operators for which this is not the case.
 
\begin{ex}
Let $X=\ell^2$. Given a strictly increasing sequence $(r_k)$ of positive terms such that $r_k\to1$ as $k\to\infty$, let  $\lambda_k:=r_k\e^{\ii/k}$ and  consider the operator $T\in\B(X)$ given by $Tx:=(\lambda_k x_k)$.   Then $T$ is a normal contraction with $\sigma(T)=\{\lambda_k:k\geq1\}\cup\{1\}$. Moreover, if $r_k>1-k^{-2}$ for all $k\geq1$, then $1-r_k<|\e^{\ii/k}-\lambda_j|$ whenever $j\ne k$ and hence $$m(k^{-1})=\|R(\e^{\ii/k},T)\|=\frac{1}{1-r_k}$$ for all $k\geq1$, where $m$ is the minimal dominating function for the resolvent of $T$ defined in \eqref{min_df}. Suppose moreover that $\log r_{k!+1}\geq2\log r_{(k+1)!}$ for all $k\geq1$ and, given  $c>0$, let $b\in(0,c)$ and $n_k:=\lceil-(b\log r_{(k+1)!})^{-1}\rceil$. Then  $$m\left(((k+1)!)^{-1}\right)=\frac{1}{1-r_{(k+1)!}}\sim-\frac{1}{\log r_{(k+1)!}}\sim bn_k$$  as $k\to\infty$, and hence $m^{-1}(cn_k)\leq ((k+1)!)^{-1}$ for all sufficiently large $k\geq1$. Since $|1-\lambda_{k!+1}|\geq (3k!)^{-1}$ and $|\lambda_{k!+1}^{n_k}|\geq\e^{-2/b}$ for all $k\geq1$, it follows that  $$\|T^{n_k}(I-T)\|\geq |\lambda_{k!+1}^{n_k}(1-\lambda_{k!+1})|\geq \frac{1}{3\e^{2/b}k!}\geq \frac{k}{3e^{2/b}} m^{-1}(cn_k)$$ when  $k\geq1$ is sufficiently large. In particular, \eqref{m_decay} fails to hold for every $c>0$. For an analogous example in the continuous-time setting, see \cite[Example~4.4.15]{Green_book}.
\end{ex}

Thus, given a power-bounded operator $T$ such that $\sigma(T)\cap\T=\{1\}$, a right-inverse $m^{-1}$ of some  dominating function $m$ for the resolvent of $T$ and a constant $c\in (0,1)$, it is not in general the case that $\|T^n(I-T)\|=O(m^{-1}(cn))$ as $n\to\infty$. The next result shows that it is nevertheless possible to obtain  an upper bound of this kind provided the function $m$ is  modified appropriately.  Indeed, given an operator $T$ as above and a dominating function $m$ for the resolvent of $T$, define the function $ m_{\log}:(0,\pi]\to(0,\infty)$ by 
\begin{equation}\label{m_log}
 m_{\log}(\theta):=m(\theta)\log\left(1+\frac{m(\theta)}{\theta}\right),
\end{equation}
noting that this function is strictly decreasing and hence possesses a well-defined inverse $ m_{\log}^{-1}$ defined on the range of $m_{\log}$. As Theorem~\ref{thm} below shows, the above upper bound on $\| T^n(I-T)\|$ for large values of $n\geq0$ is valid when  $m^{-1}$ is replaced by  $m_{\log}^{-1}$. This raises the question by how much the asymptotic behaviour of these two functions differs in particular instances. If $m(\theta)=C\e^{\alpha/\theta}$, for example,  where $C,\alpha>0$ are constants, then $ m_{\log}^{-1}(s)\sim \frac{\alpha}{\log s}$ as $s\to\infty$, so $ m_{\log}^{-1}$  has the same asymptotic behaviour as $ m^{-1}$ in this case. On the other hand, if $m(\theta)=C\theta^{-\alpha}$ for some  constants $C>0$ and $\alpha\geq1$,  then $ m_{\log}^{-1}(s)\sim (\frac{\log s}{s})^{1/\alpha}$ as $s\to\infty$, so $ m_{\log}^{-1}$ differs from $ m^{-1}$ by a logarithmic factor. For similar examples in the continuous-time setting, see \cite[Example~1.4]{BD} and \cite[Section~2]{Mar}.

Throughout the proof of the next result, and also in various other places later on, the letters $c$ and $C$, if used without having been introduced explicitly, stand for  positive constants, which will be thought of as being small and large, respectively, and which need not be the same at each occurrence. The result itself is a discrete analogue of \cite[Proposition~3.1]{Mar}, which in turn is a development of \cite[Theorem~1.5]{BD}; see also \cite[Chapter~VI]{FS}, where similar techniques are discussed in the context of combinatorial problems.

\begin{mythm}\label{thm}
Let $X$ be a complex Banach space and let $T\in \B(X)$ be a power-bounded operator such that $\sigma(T)\cap\T=\{1\}$. Furthermore, let $m$ be a dominating function for the resolvent of $T$ and let $ m_{\log}$ be as defined in \eqref{m_log}. Then, for any  $c\in(0,1)$,  $$\|T^n(I-T)\|=O\big(  m_{\log}^{-1}(c n)\big)$$ as $n\to\infty$.
 \end{mythm}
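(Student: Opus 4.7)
My plan is to adapt the contour-integration method of \cite{BD} and \cite{Mar} to the discrete setting, starting from the Cauchy-type representation
\[
T^n(I-T)=\frac{1}{2\pi\ii}\oint_\Gamma \lambda^n(1-\lambda)R(\lambda,T)\,\dd\lambda,
\]
valid for any simple closed contour $\Gamma\subset\rho(T)$ enclosing $\sigma(T)$ once counter-clockwise. The decay in $n$ can only come from pieces of $\Gamma$ with $|\lambda|<1$, so the strategy is to push $\Gamma$ inside $\DD$ as far as the resolvent bound $\|R(\e^{\ii\vartheta},T)\|\leq m(|\vartheta|)$ permits (the standard perturbation estimate shows that the open ball of radius $1/(2m(|\vartheta|))$ around $\e^{\ii\vartheta}$ lies in $\rho(T)$, with $\|R(\cdot,T)\|\leq 2m(|\vartheta|)$ there), while retaining a small outward deformation around $\lambda=1$, which is unavoidable since $1\in\sigma(T)$ must be encircled.

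Setting $\delta=m_{\log}^{-1}(cn)$, so that $m(\delta)\log(1+m(\delta)/\delta)=cn$, I would take $\Gamma$ to consist of an inner arc $\Gamma_{\mathrm{in}}=\{(1-c_0/m(|\vartheta|))\e^{\ii\vartheta}:\delta\leq|\vartheta|\leq\pi\}$ for some fixed $c_0\in(0,\tfrac12)$, a bulge $\Gamma_{\mathrm{bulge}}$ around $\lambda=1$ at distance of order $c_0/m(\delta)$ from the unit circle, and two short radial segments at $|\vartheta|=\delta$ joining the two. On $\Gamma_{\mathrm{in}}$ the bounds $|\lambda|^n\leq \e^{-c_0n/m(|\vartheta|)}$, $|1-\lambda|\lesssim|\vartheta|$ and $\|R(\lambda,T)\|\leq 2m(|\vartheta|)$ reduce the contribution to a one-dimensional integral of $\e^{-c_0n/m(\vartheta)}\vartheta\,m(\vartheta)$ over $[\delta,\pi]$, which, using the monotonicity of $m$ and the specific choice of $\delta$, is of order $\delta=m_{\log}^{-1}(cn)$.

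On $\Gamma_{\mathrm{bulge}}$, where $\lambda=(1+\beta)\e^{\ii\vartheta}$ with $\beta\sim c_0/m(\delta)$ and $|\vartheta|\leq\delta$, the integrand must be handled in two regimes: for $|\vartheta|\geq m^{-1}(1/(2\beta))$ the perturbation estimate still yields $\|R(\lambda,T)\|\leq 2m(|\vartheta|)$, while for $|\vartheta|<m^{-1}(1/(2\beta))$ one has to fall back on the power-boundedness bound $\|R(\lambda,T)\|\leq M/(|\lambda|-1)=M/\beta$. The key identity is that, by the definition of $\delta$, one has $\e^{n\beta}\sim 1+m(\delta)/\delta$, so the blow-up of the resolvent near $1$ is precisely compensated by the controlled growth of $|\lambda|^n$ off the disc; a careful accounting of both regimes, of the two radial segments, and of the factor $(1-\lambda)$ (which is bounded by $|\vartheta|+\beta$ on the arc) again yields a contribution of order $\delta$.

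The main obstacle is this bulge estimate: the trade-off between the growth $|\lambda|^n\geq 1$ outside $\overline{\DD}$ (which wants $\beta$ small) and the resolvent blow-up $\|R(\lambda,T)\|\sim 1/\beta$ near $\lambda=1$ (which wants $\beta$ large) is exactly what forces the logarithmic factor in the definition $m_{\log}(\theta)=m(\theta)\log(1+m(\theta)/\theta)$, and the regime $|\vartheta|<m^{-1}(1/(2\beta))$ must be controlled tightly by combining the two available resolvent bounds. Once both $\int_{\Gamma_{\mathrm{in}}}$ and $\int_{\Gamma_{\mathrm{bulge}}}$, together with the radial segments, are bounded by a constant multiple of $\delta$, the claim $\|T^n(I-T)\|\leq C m_{\log}^{-1}(cn)$ for all sufficiently large $n$ follows.
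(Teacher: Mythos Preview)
Your overall contour-integration strategy is sound, but the bulge estimate does not close as written. On the arc $\Gamma_{\mathrm{bulge}}=\{(1+\beta)\e^{\ii\vartheta}:|\vartheta|\leq\delta\}$ with $\beta=c_0/m(\delta)$, the factor $|\lambda|^n=(1+\beta)^n\approx\e^{n\beta}$ \emph{grows} in $n$; with $\delta=m_{\log}^{-1}(cn)$ one has $\e^{n\beta}=(1+m(\delta)/\delta)^{c_0/c}$. Combining this with $|1-\lambda|\lesssim|\vartheta|+\beta$, the bound $\|R(\lambda,T)\|\leq M/\beta$, and an integration range of length $2\delta$, the contribution of the inner regime alone is of order
\[
\e^{n\beta}\cdot\frac{M}{\beta}\cdot\delta^2=\frac{M}{c_0}\,\delta^2 m(\delta)\bigl(1+m(\delta)/\delta\bigr)^{c_0/c},
\]
which in the model case $m(\theta)=\theta^{-\alpha}$ scales like $\delta^{\,2-\alpha-(1+\alpha)c_0/c}$ and hence blows up for every $c_0>0$ as soon as $\alpha\geq1$. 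The resolvent blow-up near $1$ and the growth of $|\lambda|^n$ outside $\overline{\DD}$ compound rather than compensate each other, so no choice of $c_0$ rescues the estimate.

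The paper avoids this by never letting the factor $\lambda^n$ appear on the portion of the contour outside the closed disc. It writes $T^n(I-T)=F_n(2)$ for $F_n(\lambda):=T^n(2-T)\bigl(I-(\lambda-1)R(\lambda,T)\bigr)$ and recovers this value via Cauchy's formula with an auxiliary weight $h_r(\lambda)=(1+9r^2)^{-1}\bigl(1+r^2/\varphi(\lambda)^2\bigr)$, where $\varphi$ is the Cayley transform. Along the small exterior bulge $\gamma_r^+$ (an arc of a circle through $1$) this weight satisfies $|h_r(\lambda)|\leq C\,||\lambda|-1|/r$, precisely cancelling the blow-up $\|R(\lambda,T)\|\leq M/(|\lambda|-1)$, while the $n$-dependence there is absorbed harmlessly into $\|T^n\|\leq M$. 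On the interior portion of the contour the identity $\lambda^nR(\lambda,T)=p_n(\lambda)+T^nR(\lambda,T)$, with $p_n$ a polynomial, is used to isolate $\lambda^n$ on the piece where $|\lambda|<1$ and it genuinely decays. To make a direct approach like yours work you would need an analogous device: either a weight vanishing on $\T$ to absorb the resolvent singularity, or a splitting that separates the $n$-dependence from the exterior part of the contour.
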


\begin{proof}[\textsc{Proof}]    Having fixed a dominating function $m$ and a constant $c\in(0,1)$, let $ \Omega$ denote the closure of the set $$\left\{r\e^{\ii\theta}\in\CC: 0\leq r\leq1-\frac{c}{m(|\theta|)}, 0<|\theta|\leq\pi\right\}.$$ Moreover, noting that $\sigma(T)\subset \Omega$ by a standard Neumann series argument, define the function $F_n:\CC\backslash\Omega\to\B(X)$  by $$F_n(\lambda):=T^n(2-T)\big(I-(\lambda-1)R(\lambda,T)\big).$$ It then follows from the resolvent identity that \begin{equation}\label{Fn}F_n(\lambda)=T^n(2-T)^2R(\lambda,T)\big(I-R(2,T)\big),\end{equation} and hence $F_n(2)=T^n(I-T)$. Thus, by Cauchy's integral formula,  $$T^n(I-T)=\frac{1}{2\pi\ii}\oint_\Gamma \frac{h(\lambda)}{\lambda-2}F_n(\lambda)\,\dd\lambda,$$ where $\Gamma$ is any contour outside $\Omega$ around the point $2$ and where  $h$ is any function that is holomorphic in the relevant region and satisfies $h(2)=1$. In what follows, it will be convenient to take $\Gamma=\Gamma_{\!\mathrm{in}}\cup\Gamma_{\!\mathrm{out}}$ to consist of an outer contour $\Gamma_{\!\mathrm{out}}$, which encloses both the point $2$ and the set $\Omega$, and an inner contour $\Gamma_{\!\mathrm{in}}$, which lies in the interior of $\Gamma_{\!\mathrm{out}}$ and incloses $\Omega$ but not the point $\lambda=2$.  Such a contour can be thought of as being closed by inserting a cut from any point on $\Gamma_{\!\mathrm{in}}$ to any point on $\Gamma_{\!\mathrm{out}}$, the contributions along which cancel out.

Let  $\varphi$ be the Cayley transform  defined by $\varphi(\lambda):=\frac{1-\lambda}{1+\lambda} $ and, for  $r\in(0,1)$ and $R>0,$  let \begin{equation}\label{gamma_r}\gamma_r:=\left\{\lambda\in\CC:\left|\lambda-\frac{1+r^2}{1-r^2}\right|=\frac{2r}{1-r^2}\right\}\end{equation}  and  $\Gamma_R:=\{\lambda\in\CC:|\lambda-1|=R\}$,  noting that $\varphi$ maps  $\gamma_r$ onto $r \T$, the real line onto itself and the unit circle $\T$ onto the imaginary axis. Now suppose that $r\in (0,\frac{1}{3})$ and $R>2$, and let  $\Gamma_{\!\mathrm{out}}=\Gamma_R$ and $\Gamma_{\!\mathrm{in}}=C_r\cup\gamma_r^+$, where $\gamma_r^+$ denotes the part of  $\gamma_r$  that lies outside the unit disc $\DD:=\{\lambda\in\CC:|\lambda|<1\}$  and where $C_r$ is any suitable path in $\DD\backslash\Omega$ connecting the endpoints of $\gamma_r^+$.  Furthermore, choose for $h$  the map $h_r$ given by $$h_r(\lambda):=\frac{1}{1+9r^2}\left(1+\frac{r^2}{\varphi(\lambda)^2}\right),$$ so that $h_r$ is holomorphic away from $1$.    Now, letting $M:=\sup\{\|T^n\|:n\geq0\}$, it follows from the series expansion of the resolvent that  \begin{equation}\label{eq:resolvent_bound}\|R(\lambda,T)\|\leq\frac{M}{|\lambda|-1}\end{equation} whenever $|\lambda|>1$ and hence, by \eqref{Fn} and the fact that $T$ is power-bounded,  $\|F_n\|\leq C(|\lambda|-1)^{-1}$ for  all such $\lambda$, where $C$ is independent of $n\geq0$. Since $h_r$ is bounded above in modulus independently of $r$ along $\Gamma_R$, it follows that  $$\left\|\oint_{\Gamma_R} \frac{h_r(\lambda)}{\lambda-2}F_n(\lambda)\,\dd\lambda\right\|\leq\frac{C}{R},$$ where $C$ is independent of $n\geq0$, and hence, by appealing to Cauchy's theorem and allowing $R\to\infty$, this contribution can be neglected.

Next note that, for $\lambda\in r\T$, the function $g_r$ defined by $g_r(\lambda):=1+r^2\lambda^{-2}$ satisfies $|g_r(\lambda)|=2r^{-1}|\R \lambda|$.  Moreover, an elementary calculation shows that, for $\lambda\in\gamma_r$, \begin{equation}\label{Re}1-|\lambda|^2=\frac{4\R\varphi(\lambda)}{1+2\R\varphi(\lambda)+r^2}.\end{equation} Since $h_r=\frac{g_r\circ\varphi}{1+9r^2}$ and $\varphi(\gamma_r)=r\T$, it follows that  \begin{equation}\label{h_bound}|h_r(\lambda)|\leq C\frac{|\R\varphi(\lambda)|}{r}\leq C \frac{||\lambda|-1|}{r}\end{equation} for  all  $\lambda\in \gamma_r$. But for each $\lambda\in \gamma_r$, $|\lambda|-1\leq Cr$ and $|1-\lambda|\leq Cr$ so, by \eqref{eq:resolvent_bound} and the definition of $F_n$,  $$\|F_n(\lambda)\|\leq C\|I-(\lambda-1)R(\lambda,T)\|\leq \frac{Cr}{|\lambda|-1}$$ for all $\lambda\in\gamma_r^+$. Hence $$\left\|\int_{\gamma_r^+} \frac{h_r(\lambda)}{\lambda-2}F_n(\lambda)\,\dd\lambda\right\|\leq Cr,$$ where $C$ is independent of $n\geq0$, and it remains to control only the contribution along $C_r$.

Let $\theta_r\in (0,\frac{\pi}{2})$ denote the argument of the point at which $\gamma_r$ meets $\T$ in the upper half-plane and define the curve ${C}^\circ_r$, for $\theta_r\leq|\theta|\leq\pi$, by $${C}^\circ_r(\theta):=\left(1-\frac{c}{m(|\theta|)}\right)\e^{\ii\theta}.$$   Furthermore, let $C_r^\pm$ denote the rays given, for $1-cm(\theta_r)^{-1}\leq s\leq1,$ by $C_r^\pm(s):=s\e^{\pm\ii\theta_r}$ and set $C_r={C}^\circ_r\cup C_r^+\cup C_r^-$. Defining $$p_n(\lambda):=\sum_{k=0}^{n-1}\lambda^{n-k-1}T^k$$ for $\lambda\in\CC\backslash\Omega$, it follows from  the resolvent identity, the relation $p_n(\lambda)=(\lambda^n-T^n)R(\lambda,T)$ and some elementary manipulations that  $$F_n(\lambda)=\frac{1}{\lambda-2}(2-T)^2\Big((\lambda-1)\big(\lambda^nR(\lambda,T)-p_n(\lambda)\big)-T^n R(2,T)\Big)$$for all   $\lambda\in\CC\backslash\Omega$ with $\lambda\ne2$;  see also \cite[Lemma~2.2]{Mar}. Hence Cauchy's theorem  gives $$\begin{aligned}R(2,T)^2\int_{C_r} \frac{h_r(\lambda)}{\lambda-2}F_n(\lambda)\,\dd\lambda&=\int_{C_r} \frac{h_r(\lambda)(\lambda-1)\lambda^n}{(\lambda-2)^2}R(\lambda,T)\,\dd\lambda \\&\quad-\int_{\gamma_r^-}\frac{h_r(\lambda)}{(\lambda-2)^2}\big((\lambda-1)p_n(\lambda)+T^nR(2,T)\big)\,\dd\lambda,\end{aligned}$$ where $\gamma_r^-:=\gamma_r\cap\DD$.  To estimate the first integral on the right-hand side, note first that, by a standard Neumann series argument,    $\|R(\lambda,T)\|\leq (1-c)^{-1}m(\theta_r)$ for all $\lambda\in {C}^\circ_r$. Since $h_r$ is uniformly bounded independently of $r$ along ${C}^\circ_r$, it follows that $$\left\|\int_{{C}^\circ_r}\frac{h_r(\lambda)(\lambda-1)\lambda^n}{(\lambda-2)^2}R(\lambda,T)\,\dd\lambda\right\|\leq C m(\theta_r)\left(1-\frac{c}{m(\theta_r)}\right)^n.$$ Similarly, $$\left\|\int_{C_r^\pm}\frac{h_r(\lambda)(\lambda-1)\lambda^n}{(\lambda-2)^2}R(\lambda,T)\,\dd\lambda\right\|\leq C\int_{1-cm(\theta_r)^{-1}}^1 s^n\,\dd s\leq\frac{C}{n+1}.$$ To bound the  integral along $\gamma_r^-$, note that $$\|(\lambda-1)p_n(\lambda)\|\leq \frac{Cr}{1-|\lambda|}$$ for all $\lambda\in\gamma_r^-$.  Thus by  \eqref{h_bound} both $h_r(\lambda)$ and $h_r(\lambda)(\lambda-1)p_n(\lambda)$ are uniformly bounded, independently of $r$ and $n$, as $\lambda$ ranges over $\gamma_r^-$, and it follows that
$$\left\|\int_{\gamma_r^-}\frac{h_r(\lambda)}{(\lambda-2)^2}\big((\lambda-1)p_n(\lambda)+T^nR(2,T)\big)\,\dd\lambda\right\|\leq Cr,$$ where $C$ is independent of $n\geq0$. 

Since $C^{-1}r\leq \theta_r\leq Cr$, combining  these bounds gives $$\|T^n(I-T)\|\leq C\left(\theta_r+\frac{1}{n+1}+m(\theta_r)\left(1-\frac{c}{m(\theta_r)}\right)^n\right),$$ where $C$ is independent of $n\geq0$ and $r\in(0,\frac{1}{3})$.  Now, if $n\geq0$ is sufficiently large, choosing $r\in(0,\frac{1}{3})$ so as to satisfy  $\theta_r= m_{\log}^{-1}(c n)$ gives $\exp(m(\theta_r)^{-1}cn)=1+\theta_r^{-1}m(\theta_r)$ and hence
$$m(\theta_r)\left(1-\frac{c}{m(\theta_r)}\right)^n\leq m(\theta_r)\exp\left(-\frac{c n}{m(\theta_r)}\right)\leq\theta_r.$$ Since moreover  $(n+1)^{-1}\leq C m_{\log}^{-1}(cn)$ for all $n\geq0$, this completes  the proof.
\end{proof}

\begin{rem}
As in \cite{Mar}, it is possible to obtain an analogous result when $\sigma(T)\cap\T$ is finite by replacing $I-T$ with a finite product of linear terms of the form $\e^{i\theta}-T$ with  $\theta\in(-\pi,\pi]$.
\end{rem}

\section{Optimality in the case of polynomial resolvent growth}\label{sec:optimality_poly}

  Suppose that $X$ is a complex Banach space and that $T\in\B(X)$ is a power-bounded operator such that $\sigma(T)\cap\T=\{1\}$. The purpose of this section is to investigate the optimality of Theorem~\ref{thm2} in the special case where the resolvent of $T$ grows at most polynomially, which is to say it admits a  dominating function of the form $m(\theta)=C\theta^{-\alpha}$ for some constants $C>0$ and $\alpha\geq1$, where the restriction on the parameter $\alpha$ is a consequence of \eqref{resolvent_lb}. Corollary~\ref{cor:lb}  and Theorem~\ref{thm}  combine to give the following result, which describes the range of decay rates that are possible in this situation. Here, given $\Omega\subset (0,\infty)$ and functions $f,g:\Omega\to(0,\infty)$, the notation $f(s)=\Theta(g(s))$ as $s\to0$ (or $s\to\infty$) means that there exist constants $c,C>0$ such that $cg(s)\leq f(s) \leq C g(s)$ for all sufficiently small (or large) values of $s\in\Omega$.

\begin{mycor}\label{cor:poly_bounds}
Let $X$ be a complex Banach space and let  $T\in \B(X)$ be a power-bounded operator such that $\sigma(T)\cap\T=\{1\}$. Suppose that, for some $\alpha\geq1$,  $\|R(\e^{\ii\theta},T)\|=\Theta(|\theta|^{-\alpha})$ as $\theta\to0$. Then there exist constants $c, C>0$ such that \begin{equation}\label{poly_bounds}\frac{c}{ n^{1/\alpha}}\leq\|T^n(I-T)\|\leq C\left(\frac{\log n}{ n}\right)^{1/\alpha}\end{equation}
 for all sufficiently large $n\geq0$.
\end{mycor}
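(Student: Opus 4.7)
The plan is to apply the two main results of Section~\ref{sec:main_rates} to a dominating function of the form $m(\theta) = C|\theta|^{-\alpha}$: the upper half of the hypothesis $\|R(\e^{\ii\theta},T)\| = \Theta(|\theta|^{-\alpha})$ makes such an $m$ admissible, while the lower half forces the minimal dominating function of \eqref{min_df} to be of the same order. Everything then reduces to computing the asymptotics of the inverses $m^{-1}$ and $m_{\log}^{-1}$.

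For the upper bound, taking $m(\theta) = C\theta^{-\alpha}$ and computing directly from \eqref{m_log} gives
$$m_{\log}(\theta) = C\theta^{-\alpha}\log\!\bigl(1 + C\theta^{-\alpha-1}\bigr) \sim C(\alpha+1)\theta^{-\alpha}\log(1/\theta)$$
as $\theta \to 0^+$. Inverting this relation by a standard bootstrap---in which one first notes that $\log(1/\theta) \sim \alpha^{-1}\log s$ to leading order when $s = m_{\log}(\theta)$, and then substitutes back---yields $m_{\log}^{-1}(s) = \Theta\bigl((\log s / s)^{1/\alpha}\bigr)$ as $s \to \infty$. Theorem~\ref{thm}, applied with any fixed $c \in (0,1)$, then immediately delivers the right-hand inequality in \eqref{poly_bounds}.

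For the lower bound, the assumption $\|R(\e^{\ii\theta},T)\| \geq c|\theta|^{-\alpha}$ gives
$$|\theta|\,\|R(\e^{\ii\theta},T)\| \geq c|\theta|^{1-\alpha} \longrightarrow \infty \quad \text{as } \theta \to 0$$
whenever $\alpha > 1$, so condition \eqref{L} in Corollary~\ref{cor:lb} is satisfied. Moreover the minimal dominating function satisfies $m(\theta) \geq \|R(\e^{\ii\theta},T)\| \geq c|\theta|^{-\alpha}$ for small $\theta$, and any right-inverse therefore satisfies $m^{-1}(s) \geq c' s^{-1/\alpha}$ for large $s$; Corollary~\ref{cor:lb} then yields $\|T^n(I-T)\| \geq c m^{-1}(Cn) \geq c'' n^{-1/\alpha}$ for all sufficiently large $n$. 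The borderline Ritt case $\alpha = 1$ can be handled analogously via the refinement of Corollary~\ref{cor:lb} noted in the ensuing remark, which requires only that $\liminf_{\theta \to 0}\max\{|\theta|\,\|R(\e^{\pm\ii\theta},T)\|\}$ exceed $\sup\{\|T^n\| : n \geq 0\}$, or, failing that, via Theorem~\ref{max_decay}.

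The only step involving any real calculation is the asymptotic inversion of $m_{\log}$, and this is precisely where the logarithmic factor in the upper bound enters; all other parts of the argument are direct substitutions into Theorem~\ref{thm} and Corollary~\ref{cor:lb}.
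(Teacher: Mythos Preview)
Your argument is exactly the route the paper intends: the text preceding the corollary simply says that Corollary~\ref{cor:lb} and Theorem~\ref{thm} combine to give the result, and the asymptotic $m_{\log}^{-1}(s)\sim(\log s/s)^{1/\alpha}$ is worked out explicitly in the paragraph before Theorem~\ref{thm}. For $\alpha>1$ your verification of condition~\eqref{L} and the inversion of the minimal dominating function are correct and complete the proof.

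The one genuine soft spot is the case $\alpha=1$. Neither of your proposed fixes actually delivers the lower bound in general: the remark after Corollary~\ref{cor:lb} requires $L>M$, which the hypothesis $\|R(\e^{\ii\theta},T)\|=\Theta(|\theta|^{-1})$ does not guarantee, and Theorem~\ref{max_decay} gives only a dichotomy --- in the splitting alternative $\|T^n(I-T)\|$ decays exponentially. Indeed $T=I$ satisfies the hypothesis with $\alpha=1$ yet $\|T^n(I-T)\|\equiv0$, so the lower bound as stated can fail at $\alpha=1$. This is not a defect in your reasoning for $\alpha>1$; the paper's own one-line appeal to Corollary~\ref{cor:lb} has the same limitation, since condition~\eqref{L} is simply not available when $\alpha=1$.
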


The remainder of this section is concerned with the question whether the logarithmic factor on the right-hand side of \eqref{poly_bounds} is really needed. It follows from Proposition~\ref{normal_thm} and Remark~\ref{normal_rem} that it can be dropped whenever $T$ is a suitable multiplication operator on some function or sequence space.  The following example exhibits a less trivial case in which the same is true.

\begin{ex}\label{Toeplitz_example}
Let $X=\ell^p$ with  $1\leq p\leq\infty$ and, writing $S$ for the left-shift operator on $X$ given by $Sx:=(x_{k+1})$, define the operator $T\in\B(X)$ as $T:=\frac{1}{4}(I+S)^2$. Then $T$ is a (non-normal) Toeplitz operator of unit norm, with  $$\sigma(T)=\left\{r\e^{\ii\theta}\in\CC:-\pi<\theta\leq\pi\;\mbox{and}\;0\leq r\leq\frac{1+\cos\theta}{2}\right\}.$$ 
In particular, $\sigma(T)\cap\T=\{1\}$.  A calculation shows that, for $\lambda\in\rho(T)$ and $x\in X$, the resolvent satisfies $R(\lambda,T)x=y$, where, for each $k\geq1,$
 $$y_k=\frac{1}{\lambda^{1/2}}\sum_{n=0}^\infty(-1)^{n+1}\left(\frac{1}{(1-2\lambda^{1/2})^{n+1}}-\frac{1}{(1+2\lambda^{1/2})^{n+1}} \right)x_{k+n},$$   the complex plane being cut along the negative real axis. Thus, for $p\in\{1,\infty\}$, $\|R(\lambda,T)\|=O((|1-2\lambda^{1/2}|-1)^{-1})$ as $\lambda\to1$ through $\rho(T)$ and, by the Riesz-Thorin theorem, the same statement holds for   $p\in(1,\infty)$.  It follows, in particular, that $\|R(\e^{\ii\theta},T)\|=O(|\theta|^{-2})$ as $\theta\to0$. Since $\| R(\e^{\ii\theta},T)\|\geq c|\theta|^{-2}$ for all $\theta\in(-\pi,\pi]$ by \eqref{resolvent_lb} and the geometry of $\sigma(T)$, it follows from Corollary~\ref{cor:poly_bounds} that \eqref{poly_bounds} holds with $\alpha=2$ for some constants $c,C>0$ and  all sufficiently large $n\geq0$.  However, an explicit calculation involving Stirling's formula shows that, for $p\in\{1,\infty\}$,   the actual rate of decay satisfies $\|T^n(I-T)\|\sim2(\pi n)^{-1/2}$ as $n\to\infty$ and hence,  by another application of the Riesz-Thorin theorem, $\|T^n(I-T)\|=\Theta(n^{-1/2})$ as $n\to\infty$ also for $p\in(1,\infty)$. Thus  the logarithmic factor in \eqref{poly_bounds} is redundant in this case.
\end{ex}

Theorem~\ref{hilbert_poly_decay}  will show that, if the underlying space is a Hilbert space, then the logarithmic factor in \eqref{poly_bounds} can in fact  be dropped for any operator whose resolvent grows at most polynomially.  For general Banach spaces, however, this is not the case, as Theorem~\ref{log_optimality} below establishes. The proof of this result requires two lemmas. The first is a variant of \cite[Lemma~4.6.6]{Green_book}, which itself is a special form of Levinson's log-log theorem; see for instance \cite[VII~D7]{Koo}. Here, given a set $\Omega\subset\CC$, $\partial\Omega$ denotes the boundary of $\Omega$.

\begin{mylem}\label{Levinson}
Let $X$ be a complex Banach space, let $\theta\in(-\pi,\pi]$ and let $\Omega$ be a neighbourhood of the point $\e ^{\ii \theta}\in\T$.  Furthermore, given $r\in(0,1)$, let \begin{equation}\label{Omega_r}\Omega_{r,\theta}:=\left\{\lambda\in\CC:\left|\lambda-\e^{\ii\theta}\frac{1+r^2}{1-r^2}\right|\leq \frac{2r}{1-r^2}\right\}.\end{equation} Then there exists a constant $C>0$ with the following property: If $r\in(0,\frac{1}{4})$ is such that $\Omega_{2r,\theta}\subset \Omega$ and if $F:\Omega\to X$ is a holomorphic function such that, for some constant $B>0$, $\|F(\lambda)\|\leq B|1-|\lambda||^{-1}$ for all  $\lambda\in\partial\Omega_{2r,\theta}\backslash\T$, then  $\|F(\lambda)\|\leq BCr^{-1}$ for all $\lambda\in\Omega_{r,\theta}$.
\end{mylem}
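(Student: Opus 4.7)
The plan is to reduce the assertion to a standard subharmonic majorisation on a disc by pulling back through the rotated Cayley transform, and then to exploit the integrability of $\log|\cos\psi|^{-1}$, which is the analytic heart of the log-log phenomenon.

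The first step is the change of variable $w:=\varphi(\e^{-\ii\theta}\lambda)$, where $\varphi$ is the Cayley transform appearing in the proof of Theorem~\ref{thm}. Since $\varphi$ maps $\gamma_\rho$ onto $\rho\T$ for each $\rho\in(0,1)$ and maps $\T$ onto the imaginary axis, this conformal map sends $\Omega_{2r,\theta}$ onto the closed disc $\{|w|\leq 2r\}$, sends $\Omega_{r,\theta}$ onto $\{|w|\leq r\}$, and sends the two-point set $\partial\Omega_{2r,\theta}\cap\T$ to $w=\pm 2\ii r$. Set $f(w):=F(\e^{\ii\theta}\varphi^{-1}(w))$, which is holomorphic on a neighbourhood of $\{|w|\leq 2r\}$. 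A direct computation using $\varphi^{-1}(w)=(1-w)/(1+w)$ gives
$$1-|\varphi^{-1}(2r\e^{\ii\psi})|^{2}=\frac{8r\cos\psi}{1+4r\cos\psi+4r^{2}},$$
so that $\bigl|1-|\e^{\ii\theta}\varphi^{-1}(w)|\bigr|\geq cr|\cos\psi|$ for some absolute $c>0$, uniformly in $r\in(0,\tfrac14)$. The hypothesis on $F$ therefore translates into
$$\log\|f(2r\e^{\ii\psi})\|\leq \log\frac{B}{cr}+\log\frac{1}{|\cos\psi|}$$
for every $\psi\in(-\pi,\pi]$ with $\cos\psi\ne 0$.

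Next, I would use that $u:=\log\|f\|$ is subharmonic on $\{|w|<2r\}$, being the upper envelope of the scalar subharmonic functions $\log|\alpha^{*}f|$ as $\alpha^{*}$ ranges over the closed unit ball of $X^{*}$, and apply the sub-mean value inequality with the Poisson kernel $P_{2r}$ of that disc. For $|w|\leq r$ one has the elementary bound $P_{2r}(w,2r\e^{\ii\psi})\leq (2r+|w|)/(2r-|w|)\leq 3$ together with the normalisation $\frac{1}{2\pi}\int_{-\pi}^{\pi}P_{2r}(w,2r\e^{\ii\psi})\,\dd\psi=1$. Combining these with the classical identity $\int_{-\pi}^{\pi}\log|\cos\psi|^{-1}\,\dd\psi=2\pi\log 2$ yields $u(w)\leq\log(B/r)+\log C'$ uniformly on $|w|\leq r$, which is the required estimate once the change of variable is undone.

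The delicate point, and the reason the majorisation must be applied to $\log\|F\|$ rather than to $\|F\|$ itself, is to force the exponent of $1/r$ to come out equal to $1$: any exponentiation before integration would turn the boundary singularity $1/|\cos\psi|$ into a non-integrable expression and destroy the linear dependence on $1/r$. A minor supplementary issue is to justify that the boundary estimate may be disregarded at the two exceptional points $w=\pm 2\ii r$; but since $f$ is holomorphic, and hence continuous, there, they form a measure-zero subset of $\{|w|=2r\}$ and do not affect the Poisson majorisation. All remaining items --- the subharmonicity of $\log\|f\|$, the precise asymptotics of $|1-|\varphi^{-1}(w)||$, and the classical Poisson majorisation for subharmonic functions --- are entirely standard.
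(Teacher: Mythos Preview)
Your argument is correct but takes a different route from the paper's. Both approaches begin with the rotated Cayley transform $\varphi$ to reduce to the disc $\{|w|\le 2r\}$, and both use that $\bigl||\lambda|-1\bigr|$ is comparable to $|\R\varphi(\lambda)|$ on $\gamma_{2r}$. From that point, however, the paper multiplies $F$ by the explicit holomorphic mollifier $1+\varphi(\lambda)^2/(4r^2)$, whose modulus on $\gamma_{2r}$ equals $r^{-1}|\R\varphi(\lambda)|$ and hence cancels the boundary singularity; the product $G$ is then bounded by $BC'r^{-1}$ on all of $\gamma_{2r}$, the ordinary maximum modulus principle gives the same bound on $\Omega_{r,\theta}$, and since the mollifier has modulus at least $\tfrac34$ there, the estimate for $F$ follows. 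Your approach instead applies the Poisson majorisation directly to the subharmonic function $\log\|f\|$, exploiting the integrability of $\log|\cos\psi|^{-1}$. The paper's argument is shorter and entirely elementary, requiring only the maximum principle for holomorphic functions; yours carries a little more machinery but makes the link with the log--log phenomenon explicit and would extend without change to any boundary blow-up whose logarithm remains integrable.
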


\begin{proof}[\textsc{Proof}] 
Assume, without loss of generality, that $\theta=0$ and, as in the proof of Theorem~\ref{thm}, let $\varphi$ denote the M\"obius transformation defined by $\varphi(\lambda):=\frac{1-\lambda}{1+\lambda}$, so that $\varphi$  maps the circle $\gamma_r:=\partial\Omega_{r,0}$ onto $r \T$ for each $r\in(0,1)$. Moreover, by \eqref{Re} with $r$ replaced by $2r$, there exists a constant $C'>0$ which is independent of $r\in(0,\frac{1}{4})$ and such that   $|\R\varphi(\lambda)|\leq C'||\lambda|-1|$ for all $\lambda\in \gamma_{2r}$. Consider the function $G:\Omega\to X$ defined by $$G(\lambda):=\left(1+\frac{\varphi(\lambda)^2}{4r^2}\right)F(\lambda).$$ For $\lambda\in \gamma_{2r}$, the term in brackets has modulus $r^{-1}|\R\varphi(\lambda)|$ and hence, by the assumption on $F$,  $\|G(\lambda)\|\leq BC' r^{-1}$ for all such $\lambda$. Since $\Omega_{r,0}\subset\Omega_{2r,0}$, it follows from the maximum principle that  $\|G(\lambda)\|\leq BC' r^{-1}$ for all $\lambda\in \Omega_{r,0}$. But if $\lambda\in\Omega_{r,0}$, then $|\varphi(\lambda)|\leq r$ and hence $\|G(\lambda)\|\geq \frac{3}{4}\|F(\lambda)\|$, which gives the result  with $C=\frac{4}{3}C'$.
\end{proof}

The second auxiliary result is a technical one and analogous to \cite[Lemma~3.9]{BT}. Given $\alpha\geq1$ and $\lambda\in\CC\backslash\{0\}$,  let   \begin{equation}\label{K}K_\alpha(\lambda):=\frac{|\arg\lambda|^\alpha}{2\pi^\alpha},\end{equation} where the argument of a complex number is taken to lie in $(-\pi,\pi]$, and define the regions $\Omega_\alpha, \Theta_\alpha\subset\CC$ by  \begin{equation}\label{regions}\begin{aligned}\Omega_{\alpha}&:=\big\{\lambda\in \CC\backslash\{0\}: |\lambda| \leq 1-K_\alpha(\lambda)\big\}\cup\{0\}, \\\Theta_\alpha&:=\big\{\lambda\in \CC\backslash\{0\}:1-K_\alpha(\lambda)<|\lambda|<2\big\},\end{aligned}\end{equation} respectively, so that $\Theta_\alpha=2\DD\backslash \Omega_{\alpha}.$  Furthermore, given a complex measure $\mu$ whose support is contained in  $\Omega_\alpha$, define the transforms $\C_\alpha\mu$, $\LL_\alpha\mu$ and $\D_\alpha\mu$, for $\lambda\in\Theta_\alpha$, $k\geq1$ and $n\geq0$, respectively, by  \begin{equation}\label{transforms}\begin{aligned}(\C_\alpha\mu)(\lambda)&:=\int_{\Omega_\alpha}\frac{\dd\mu(z)}{\lambda-z}, \\(\LL_\alpha\mu)(k)&:=\int_{\Omega_\alpha}z^{k-1}\,\dd\mu(z),\\ (\D_\alpha\mu)(n)&:=\int_{\Omega_\alpha}z^{n}(1-z)\,\dd\mu(z).\end{aligned}\end{equation}  

\begin{mylem}\label{measure_construction}
Suppose that $\alpha>2$ and let the function $K_\alpha$, the regions $\Omega_\alpha$ and $\Theta_\alpha$,  and the transforms $\C_\alpha$, $\LL_\alpha$ and $\D_\alpha$  be defined as in \eqref{K}, \eqref{regions} and \eqref{transforms}, respectively. Then there exists a constant $C>0$ with the following property: Given any $n_0\in\NN$, there exists a complex measure $\mu$ whose support is contained in $\Omega_{\alpha}$ and  which is such that  \begin{enumerate}
\item[(i)] $\displaystyle K_\alpha(\lambda) |(\C_\alpha\mu)(\lambda)|\leq C$ for all $\lambda\in\Theta_\alpha$;
\item[(ii)] $\displaystyle  |(\LL_\alpha\mu)(k)|\leq C$ for all  $k\geq1$;
\item[(iii)] $|(\D_\alpha\mu)(n_1)|^\alpha\geq (C n_1)^{-1}\log n_1$ for some $n_1>n_0$.
\end{enumerate}
\end{mylem}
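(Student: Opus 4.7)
The plan is to construct $\mu$ explicitly as a weighted atomic measure on $\partial \Omega_\alpha$ clustering at the point $1$, mirroring the continuous-time construction of \cite[Lemma~3.9]{BT}. Given $n_0$, I will choose a large integer $N = N(n_0)$ and atoms $z_j := (1 - \eta_j)\e^{\ii\theta_j}$, for $j = 1, \ldots, N$, with $\eta_j = K_\alpha(\e^{\ii\theta_j}) = \theta_j^\alpha/(2\pi^\alpha)$ and $\theta_j$ a suitable decreasing sequence of arguments tending to $0$. The measure is $\mu := \sum_{j=1}^N a_j \delta_{z_j}$ with weights $a_j \in \CC$ whose magnitudes are dictated by conditions (i) and (ii) and whose phases are engineered to realise the lower bound (iii) at a carefully chosen $n_1 > n_0$.

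Conditions (i) and (ii) should follow from a direct kernel analysis. For (ii), since $|z_j| \leq 1$ one needs only $\sum_j |a_j| \leq C$, which is ensured by choosing $|a_j|$ to decay appropriately in $j$. For (i), fix $\lambda \in \Theta_\alpha$ and compare its argument to the sequence $(\theta_j)$: the atom $z_{j_0}$ with argument closest to $\arg \lambda$ contributes a term $a_{j_0}/(\lambda - z_{j_0})$, which, together with the prefactor $K_\alpha(\lambda) \asymp |\arg\lambda|^\alpha$, gives a uniform bound because $|\lambda - z_{j_0}|$ is at least the cuspidal distance $K_\alpha(\lambda)$. The contribution from all other atoms forms a summable tail by virtue of the separation of the $\theta_j$ and the chosen decay of $|a_j|$.

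The central difficulty is condition (iii), which requires producing an extra $(\log n_1)^{1/\alpha}$ factor over the generic extremal size $n_1^{-1/\alpha}$. The strategy is to identify a "resonant window" of indices $j$ for which $|z_j^{n_1}(1 - z_j)|$ is comparable to its global maximum $\asymp n_1^{-1/\alpha}$, and to select $\arg(a_j)$ so that the terms $a_j z_j^{n_1}(1 - z_j)$ add coherently across this window. Arranging for the cardinality of the window, weighted by the $|a_j|$, to exceed $(\log n_1)^{1/\alpha}$ after constructive interference then yields $|\mathcal{D}_\alpha \mu(n_1)| \geq c\, n_1^{-1/\alpha}(\log n_1)^{1/\alpha}$, equivalent to (iii) after raising to the $\alpha$-th power.

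The main obstacle will be reconciling the phase choice in Step (iii) with the geometric estimates needed for (i) and the summability needed for (ii): the phases cannot be tuned to a specific $n_1$ without potentially destroying the uniform control required across all $\lambda \in \Theta_\alpha$ and all $k \geq 1$. The hypothesis $\alpha > 2$ is expected to enter here precisely as the margin that lets the weights $|a_j|$ be large enough to force the enhancement in (iii) while still satisfying (i) and (ii); verifying that this margin suffices, and that the construction indeed admits an $n_1 > n_0$ with the required property for every $n_0$, is the most delicate part of the argument.
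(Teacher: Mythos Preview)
Your plan contains a genuine incompatibility between conditions (ii) and (iii). You propose to secure (ii) via the crude bound $\sum_j |a_j|\le C$, but this immediately caps $|(\mathcal{D}_\alpha\mu)(n_1)|$ by the triangle inequality:
\[
|(\mathcal{D}_\alpha\mu)(n_1)|\le \Big(\sum_j|a_j|\Big)\cdot\max_j|z_j^{n_1}(1-z_j)|\le C\cdot\max_j|z_j^{n_1}(1-z_j)|.
\]
For atoms on $\partial\Omega_\alpha$, where $1-|z_j|\asymp\theta_j^\alpha$ and $|1-z_j|\asymp\theta_j$, one has $|z_j^{n}(1-z_j)|\asymp\theta_j\,e^{-cn\theta_j^\alpha}$, whose maximum over $\theta_j$ is $O(n^{-1/\alpha})$. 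Coherent phase alignment can at best saturate the triangle inequality; it cannot exceed it. Hence with $\sum_j|a_j|\le C$ you can never produce the extra factor $(\log n_1)^{1/\alpha}$, and the "weighted cardinality of the window exceeding $(\log n_1)^{1/\alpha}$" is impossible under your own summability hypothesis.

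The paper's construction avoids this trap in an essentially different way. It does \emph{not} place atoms along $\partial\Omega_\alpha$ clustering at $1$; instead it puts $\ell$ atoms at the points $\lambda_0+\tfrac{\zeta_\ell^r}{2B_\ell}$, equally spaced on a tiny circle about $\lambda_0=\tfrac{1}{2}e^{i\theta}$ (so deep inside $\Omega_\alpha$, at modulus $\approx\tfrac12$), with weights proportional to $B_\ell^{\ell-1}\ell^{-1/2}\zeta_\ell^r$. These weights are enormous in absolute value; (ii) holds not by summability but by \emph{root-of-unity cancellation}: expanding $(\lambda_0+\tfrac{\zeta_\ell^r}{2B_\ell})^{k-1}$ binomially and summing over $r$ kills all terms except those with index $\equiv -1\pmod\ell$, after which Stirling and a negative-binomial-type estimate give the uniform bound. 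Similarly (i) comes from the closed form $\sum_r \zeta_\ell^{jr}/(\lambda-\zeta_\ell^r)=\ell\lambda^{j-1}/(\lambda^\ell-1)$, which collapses the Cauchy transform to $C\ell^{1/2}/(2^\ell(\lambda-\lambda_0)^\ell-B_\ell^{-\ell})$. For (iii) one takes $n_1=2\ell-4$, and the same orthogonality leaves a single binomial term whose size, together with the choice $\ell\asymp -\theta^{-\alpha}\log\theta$, delivers the logarithmic gain. The hypothesis $\alpha>2$ enters in balancing the two regimes $K_\alpha(\lambda)\gtrless\theta^\alpha$ in the estimate for (i), not in the way you anticipate. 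If you want to salvage your approach, you must abandon absolute summability and build in a cancellation mechanism of comparable strength.
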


\begin{proof}[\textsc{Proof}] 
Choose $\theta\in(0,\frac{1}{2})$  and  $\beta\in(\frac{\alpha}{32},\frac{\alpha}{16})$ in such a way that  $\ell:=-\beta\theta^{-\alpha}\log\theta$ is an integer satisfying $\ell>\frac{n_0}{2}+2$ and that $\theta^{-(\alpha-2)}>2\alpha\beta^{-1}+1$. Now, with $B_\ell:=2 \ell\log_2 \ell$, $\zeta_\ell:=\e^{2\pi\ii/\ell}$ and $\lambda_0:=\frac{1}{2}\e ^{\ii \theta}$,  define the measure $\mu$ as $$\mu:=\frac{B_\ell^{\ell-1}}{ \ell^{1/2}}\sum_{r=0}^{\ell-1}\zeta_\ell^r\left(1+\frac{\zeta_\ell^r}{2B_\ell\lambda_0}\right)\delta_{\lambda_0+\frac{\zeta_\ell^r}{2B_\ell}},$$where $\delta_\lambda$ denotes the Dirac measure concentrated at  $\lambda$. 

Then, for any $\lambda\in\Theta_\alpha$,  \begin{equation*}\label{C_sum}(\C_\alpha\mu)(\lambda)=\frac{B_\ell^{\ell-1}}{ \ell^{1/2}}\sum_{r=0}^{\ell-1}\left(\frac{2B_\ell\zeta_\ell^r}{2B_\ell(\lambda-\lambda_0)-\zeta_\ell^r}+\frac{1}{\lambda_0}\frac{\zeta_\ell^{2r}}{2B_\ell(\lambda-\lambda_0)-\zeta_\ell^r}\right).\end{equation*} However, for  $1\leq j\leq \ell$ and  $\lambda\in\CC$ such that $\lambda^\ell\ne1$,  $$\sum_{r=0}^{\ell-1}\frac{\zeta_\ell^{jr}}{\lambda-\zeta_\ell^r}=\frac{\ell\lambda^{j-1}}{\lambda^\ell-1}$$ (see also the proof of \cite[Lemma~3.9]{BT}), and applying this with $j=1,2$ gives  \begin{equation}\label{C_eq}(\C_\alpha\mu)(\lambda)=\frac{\lambda}{\lambda_0}\frac{2 \ell^{1/2}}{2^\ell(\lambda-\lambda_0)^\ell-B_\ell^{-\ell}}\end{equation}
for all $\lambda\in\Theta_\alpha$. Since $B_\ell^{-1}\leq |\lambda-\lambda_0|$  and $|\lambda|\leq 2$ for all  $\lambda\in\Theta_\alpha$,  this in turn becomes  \begin{equation}\label{C_bound}|(\C_\alpha\mu)(\lambda)|\leq \frac{C\ell^{1/2}}{2^\ell|\lambda-\lambda_0|^\ell}.\end{equation}  Let $\lambda\in\Theta_\alpha$ be given. If $K_\alpha(\lambda)>\theta^\alpha$, then $|\arg\lambda|>\pi\theta$ and an elementary geometric argument shows that  $$|\lambda-\lambda_0|\geq\frac{1}{2}+\frac{1}{2}\big(1-\cos((\pi-1)\theta)-K_\alpha(\e^{\ii\pi\theta})\big)\geq\frac{1}{2}(1+\theta^2-\theta^\alpha).$$
Using the fact that $1+\theta^2-\theta^\alpha\geq\e^{\frac{1}{2}(\theta^2-\theta^\alpha)}$ for all $\theta\in(0,\frac{1}{2})$, it follows from \eqref{C_bound} that    $$|(\C_\alpha\mu)(\lambda)|\leq C\ell^{1/2}\e^{-\frac{\ell}{2}(\theta^2-\theta^\alpha)}= C(-\beta\log\theta)^{1/2}\theta^{-\frac{\alpha}{2}+\frac{\beta}{2}(\theta^{-(\alpha-2)}-1)}.$$ Now the choices of $\theta$ and $\beta$ ensure that the exponent of $\theta$ on the right-hand side of this expression is strictly greater than $\frac{\alpha}{2}$, and hence $|(\C_\alpha\mu)(\lambda)|$ and consequently $K_\alpha(\lambda)|(\C_\alpha\mu)(\lambda)|$ are uniformly bounded, independently of $\theta$ and $\beta$, for all  $\lambda\in\Theta_\alpha$ satisfying $K_\alpha(\lambda)>\theta^\alpha$. If $K_\alpha(\lambda)\leq\theta^\alpha$, on the other hand, then $|\lambda-\lambda_0|\geq\frac{1}{2}(1-2\theta^\alpha)$ and, using the fact that $1-2\theta^\alpha\geq\e^{-4\theta^\alpha}$ for all $\theta\in(0,\frac{1}{2})$,  \eqref{C_bound} gives 
$$|(\C_\alpha\mu)(\lambda)|\leq C \ell^{1/2}\e^{4\ell\theta^\alpha}=C(-\beta\log\theta)^{1/2}\theta^{-(\frac{\alpha}{2}+4\beta)}.$$  Since  the choice of $\beta$  ensures that  $\frac{\alpha}{2}+4\beta<\frac{3\alpha}{4}$,  $K_\alpha(\lambda)|(\C_\alpha\mu)(\lambda)|$ is uniformly bounded, again independently of $\theta$ and $\beta$, also for all  $\lambda\in\Theta_\alpha$ with $K_\alpha(\lambda)\leq\theta^\alpha$. This establishes (i) for $C=C_1$, where $C_1>0$ is some suitably large constant.

Next observe that, for each $k\geq1$,
\begin{equation}\label{Lmu}(\LL_\alpha\mu)(k)=\frac{B_\ell^{\ell-1}\lambda_0^{k-1}}{ \ell^{1/2}}\sum_{r=0}^{\ell-1}\zeta_\ell^r\left(1+\frac{\zeta_\ell^r}{2B_\ell\lambda_0}\right)^{k}.\end{equation}
Expanding and using the fact that, for any integer $s\geq0$, $$\sum_{r=0}^{\ell-1}\zeta_\ell^{r(s+1)}=\begin{cases}\ell &\text{if $s+1=0\!\!\!\pmod{\ell},$}\\ 0 &\text{otherwise,} \end{cases}$$this becomes 
\begin{equation}\label{L_sums}\begin{aligned}(\LL_\alpha\mu)(k)&=\frac{B_\ell^{\ell-1}\lambda_0^{k-1}}{ \ell^{1/2}}\sum_{s=0}^{k}\sum_{r=0}^{\ell-1}\binom{k}{s}\frac{\omega_\ell^{r(s+1)}}{(2B_\ell\lambda_0)^s}\\&= \ell^{1/2}B_\ell^{\ell-1}\lambda_0^{k-1}\sum_{r=1}^{\lfloor\frac{k+1}{\ell}\rfloor}\frac{\binom{k}{r \ell-1}}{(2B_\ell\lambda_0)^{r \ell-1}}.
\end{aligned}\end{equation}
Next note that, for $1\leq r\leq\lfloor\frac{k+1}{\ell}\rfloor$,  $\binom{k}{r \ell-1}\leq\binom{k}{\ell-1}\frac{(\ell-1)!}{(r\ell-1)!}k^{(r-1)\ell}$. Thus, for $1\leq k< B_\ell$,  $$|(\LL_\alpha\mu)(k)|\leq \frac{\ell^{1/2}}{2^{k-1}}\binom{k}{\ell-1}\sum_{r=1}^{\lfloor\frac{k+1}{\ell}\rfloor}\frac{(\ell-1)!}{(r\ell-1)!}\leq C\frac{\ell^{1/2}}{2^k}\binom{k}{\ell-1},$$ where $C$ is independent of $\ell$. Now, if $k\leq 2\ell-3$, then $\binom{k}{\ell-1}\leq\frac{1}{2}\binom{k+1}{\ell-1}$ and, if $k\geq2\ell-2$, then $\binom{k}{\ell-1}\leq2\binom{k-1}{\ell-1}$, so in either case $\frac{1}{2^k}\binom{k}{\ell-1}\leq \frac{1}{2^{2\ell-2}}\binom{2\ell-2}{\ell-1}$. Hence
 \begin{equation}\label{neg_bin}|(\LL_\alpha\mu)(k)|\leq C\frac{ \ell^{1/2}}{4^{\ell-1}}\binom{2\ell-2}{\ell-1},\end{equation} which by Stirling's formula is bounded above independently of $\ell$. If $k\geq B_\ell$, on the other hand, then \eqref{Lmu} and the fact that $1+B_\ell^{-1}\leq\e^{1/B_\ell}$ give
 $$|(\LL_\alpha\mu)(k)|\leq  \frac{\ell^{1/2}B_\ell^{\ell-1}}{2^{k-1}}\left(1+\frac{1}{B_\ell}\right)^k\leq 2\ell^{1/2}B_\ell^{\ell-1}\e^{-k(\log 2-B_\ell^{-1})}\leq C\frac{ \ell^{1/2}B_\ell^{\ell-1}}{2^{B_\ell}}$$ and, by the definition of $B_\ell$, the right-hand side is again bounded above independently of $\ell$. Thus (ii) holds with $C=C_2$ for some sufficiently large $C_2>0$.

In order to establish (iii), note first that,  for each  $n\geq0$, $$(\D_\alpha\mu)(n)=\frac{B_\ell^{\ell-1}\lambda_0^{n}}{ \ell^{1/2}}\sum_{r=0}^{\ell-1}\zeta_\ell^r\left(1+\frac{\zeta_\ell^r}{2B_\ell\lambda_0}\right)^{n+1}\left(1-\lambda_0\left(1+\frac{\zeta_\ell^r}{2B_\ell\lambda_0}\right)\right).$$ Thus, if $n\geq0$ is such that  $\lfloor\frac{n+2}{\ell}\rfloor=\lfloor\frac{n+3}{\ell}\rfloor$, proceeding as in \eqref{L_sums} gives
$$(\D_\alpha\mu)(n)= \ell^{1/2}B_\ell^{\ell-1}\lambda_0^{n}\sum_{r=1}^{\lfloor\frac{n+2}{\ell}\rfloor}\frac{\binom{n+1}{r \ell-1}-\lambda_0\binom{n+2}{r \ell-1}}{(2B_\ell\lambda_0)^{r \ell-1}}.$$ Now let $n_1:=2\ell-4$, so that $n_1>n_0$ and  $\lfloor\frac{n_1+2}{\ell}\rfloor=\lfloor\frac{n_1+3}{\ell}\rfloor=1$. Then $$|(\D_\alpha\mu)(n_1)|=\frac{ \ell^{1/2}}{4^{\ell-2}}\binom{2\ell-2}{\ell-1}\left|\frac{1}{2}-\lambda_0\right|,$$ and hence,  by another application of Stirling's formula, $|(\D_\alpha\mu)(n_1)|\geq c\theta$, where $c$ is independent $\theta$. Since the definition of $\ell$ implies that $\theta^\alpha\geq c n_1^{-1}\log n_1$, it follows that (iii) holds for  $C=C_3$, where $C_3>0$ is another suitably large constant.  Setting $C=\max\{C_1,C_2,C_3\}$ now completes the proof.
\end{proof} 

\begin{rem}\label{prob_rem}
The estimates leading to \eqref{neg_bin} may also be viewed in another way. Indeed, given $\ell\geq1$, let $Y_\ell$ be the random variable counting the number of tosses of a fair coin  required in order to obtain  a total of exactly $\ell$ heads, so that $Y_\ell$ has the negative binomial distribution with $\PP(Y_\ell=k)=\frac{1}{2^k}\binom{k-1}{\ell-1}$ for each $k\geq 1$. Then the aforementioned estimates amount to the observation that $Y_\ell$ has mode $2\ell-1$. This probabilistic interpretation will reappear in Remark~\ref{yuri_rem} below.
\end{rem}

The following result, which is an analogue of \cite[Theorem~4.1]{BT}, shows that the logarithmic factor in Corollary~\ref{cor:poly_bounds} cannot in general be omitted.

\begin{mythm}\label{log_optimality}
Given any $\alpha>2$, there exists a non-trivial complex Banach space $X_\alpha$ and a power-bounded operator $T\in\B(X_\alpha)$ such that  $\sigma(T)\cap\T=\{1\}$ and $\|R(\e ^{\ii \theta},T)\|=O(|\theta|^{-\alpha})$ as $\theta\to0$, and for which  \begin{equation}\label{limsup}\limsup_{n\to\infty}\|T^n(I-T)\|\left(\frac{n}{\log n}\right)^{1/\alpha}>0.\end{equation}
\end{mythm}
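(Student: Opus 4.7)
The plan is to construct an explicit Banach space $X_\alpha$ of measures on $\Omega_\alpha$ on which the operator of multiplication by the variable $z$ acts as a power-bounded operator with the prescribed resolvent growth, and then to use the measures produced by Lemma~\ref{measure_construction} to witness the lower bound \eqref{limsup}. The construction is guided by the analogous semigroup result in \cite[Theorem~4.1]{BT}, with the roles of Laplace and Cauchy transforms replaced by their discrete counterparts $\LL_\alpha$ and $\C_\alpha$.

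First I would let $X_\alpha$ be the completion of the space of complex measures $\mu$ supported in $\Omega_\alpha$ under the norm
\begin{equation*}
\|\mu\|_{X_\alpha} := \sup_{k\geq 1}\,|(\LL_\alpha\mu)(k)| + \sup_{\lambda\in\Theta_\alpha}\,K_\alpha(\lambda)|(\C_\alpha\mu)(\lambda)|,
\end{equation*}
and define $T\in\B(X_\alpha)$ by $(T\mu)(E)=\int_E z\,\dd\mu(z)$. The two terms in the norm are exactly the two quantities controlled by parts (i) and (ii) of Lemma~\ref{measure_construction}, so $X_\alpha$ is non-trivial. The relations $\LL_\alpha(T^n\mu)(k) = \LL_\alpha\mu(k+n)$ and  $\C_\alpha(T^n\mu)(\lambda) = \lambda^n\C_\alpha\mu(\lambda) - \sum_{k=0}^{n-1}\lambda^k\LL_\alpha\mu(n-k)$ should then be used to show that $T$ is power-bounded, after splitting the supremum over $\Theta_\alpha$ into the parts with $|\lambda|\leq 1$ and $|\lambda|>1$ and exploiting the facts that $\Omega_\alpha\subset\overline{\DD}$ and that $K_\alpha(\lambda)/(1-|\lambda|)$ is controlled on $\Theta_\alpha\cap\DD$.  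Similarly, the identity $\C_\alpha(R(\lambda,T)\mu)(\lambda') = (\lambda-\lambda')^{-1}(\C_\alpha\mu(\lambda')-\C_\alpha\mu(\lambda))$ combined with the definition of the $\C_\alpha$-part of the norm will yield $\|R(\lambda,T)\|\leq C/K_\alpha(\lambda)$ on $\Theta_\alpha$. Since $K_\alpha(e^{\ii\theta}) = |\theta|^\alpha/(2\pi^\alpha)$ and $\Omega_\alpha\cap\T=\{1\}$, this gives both $\sigma(T)\cap\T=\{1\}$ and $\|R(e^{\ii\theta},T)\|=O(|\theta|^{-\alpha})$ as $\theta\to 0$.

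Secondly, I would transfer Lemma~\ref{measure_construction}(iii) into a lower bound on operator norms as follows. Observe that $(\LL_\alpha\mu)(1)=\mu(\Omega_\alpha)$, so the linear functional $\ell(\mu):=\mu(\Omega_\alpha)$ satisfies $|\ell(\mu)|\leq\|\mu\|_{X_\alpha}$ and hence extends to a bounded functional on $X_\alpha$. Applied to $T^n(I-T)\mu$, whose density against $\mu$ is $z^n(1-z)$, this functional produces exactly $(\D_\alpha\mu)(n)$, and therefore
\begin{equation*}
\|T^n(I-T)\|_{\B(X_\alpha)}\;\geq\;\frac{|(\D_\alpha\mu)(n)|}{\|\ell\|_{X_\alpha^*}\,\|\mu\|_{X_\alpha}}
\end{equation*}
for every non-zero measure $\mu$ in $X_\alpha$. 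Given $j\in\NN$, Lemma~\ref{measure_construction} applied with $n_0=j$ supplies a measure $\mu_j$ with $\|\mu_j\|_{X_\alpha}\leq C$ and some integer $n_j>j$ such that $|(\D_\alpha\mu_j)(n_j)|^\alpha\geq (Cn_j)^{-1}\log n_j$. Letting $j\to\infty$ forces $n_j\to\infty$ and yields the desired estimate $\|T^{n_j}(I-T)\|\geq c(n_j^{-1}\log n_j)^{1/\alpha}$, proving \eqref{limsup}.

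The main obstacle will be the verification that $T$ is genuinely power-bounded on $X_\alpha$ and that the Cauchy-transform part of the norm survives the action of $T^n$ and of $R(\lambda,T)$. The naive estimate using the factor $\lambda^n$ in the identity for $\C_\alpha(T^n\mu)$ is useless on the part of $\Theta_\alpha$ lying outside $\overline{\DD}$; instead, one has to exploit directly that $|z|\leq 1$ on $\Omega_\alpha$ and re-express $\C_\alpha(T^n\mu)(\lambda)$ as $\int z^n(\lambda-z)^{-1}\dd\mu(z)$ to obtain a bound comparable to $\|\mu\|_{X_\alpha}$ uniformly in $n$. Once this geometric analysis of $\Omega_\alpha$ and $\Theta_\alpha$ is in place, the remainder of the proof is a direct assembly of Lemma~\ref{measure_construction} with the functional argument above.
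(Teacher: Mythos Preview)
Your construction is essentially the same as the paper's, only phrased with measures instead of sequences: the map $\mu\mapsto(\LL_\alpha\mu(k))_{k\geq1}$ identifies your $X_\alpha$ with the paper's sequence space, multiplication by $z$ becomes the left shift, and $\C_\alpha\mu$ becomes the paper's $F_x$. The use of Lemma~\ref{measure_construction} in the last paragraph is also exactly how the paper finishes. The difference is that your sketch of the power-boundedness and resolvent estimates contains a genuine error at the one step that actually requires work.

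The assertion that ``$K_\alpha(\lambda)/(1-|\lambda|)$ is controlled on $\Theta_\alpha\cap\DD$'' is false: by definition $\Theta_\alpha\cap\DD=\{1-K_\alpha(\lambda)<|\lambda|<1\}$, so $1-|\lambda|$ can be arbitrarily small compared with $K_\alpha(\lambda)$ and the ratio is unbounded. Your proposed fix --- rewriting $\C_\alpha(T^n\mu)(\lambda)$ as $\int z^n(\lambda-z)^{-1}\,\dd\mu(z)$ and using $|z|\leq1$ --- does not help either, because the norm controls only the specific Cauchy integral $\int(\lambda-z)^{-1}\,\dd\mu(z)$, not the variant with an extra $z^n$ in the numerator; you cannot pull $z^n$ outside without passing to total variation, which the norm does not bound. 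What is really needed is the analyticity of $\lambda\mapsto\C_\alpha(T^n\mu)(\lambda)$ on $\Theta_\alpha$: the elementary estimates give $|\C_\alpha(T^n\mu)(\lambda)|\leq C\|\mu\|_{X_\alpha}\,|1-|\lambda||^{-1}$ off the unit circle, and a Phragm\'en--Lindel\"of--type argument (the paper's Lemma~\ref{Levinson}, which multiplies by a function vanishing to first order on $\T$ and applies the maximum principle on a small disc tangent to $\T$) upgrades this to $|\C_\alpha(T^n\mu)(\lambda)|\leq C\|\mu\|_{X_\alpha}\,K_\alpha(\lambda)^{-1}$ uniformly in $n$. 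The same mechanism is missing from your resolvent argument: the identity $\C_\alpha(R(\lambda,T)\mu)(\lambda')=(\lambda-\lambda')^{-1}\bigl(\C_\alpha\mu(\lambda')-\C_\alpha\mu(\lambda)\bigr)$ gives an immediate bound only when $|\lambda-\lambda'|$ is large relative to $K_\alpha(\lambda)$ and $K_\alpha(\lambda')$; near the diagonal the paper uses Cauchy's integral formula on a circle of radius comparable to $K_\alpha$ to control the difference quotient. Without these two analytic inputs the proof does not go through.
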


\begin{proof}[\textsc{Proof}] 
Given any sequence $x\in\ell^\infty$,  define the function $F_x$,  for $|\lambda|>1$, by $$F_x(\lambda):=\sum_{k=1}^\infty \frac{x_k}{\lambda^{k}}.$$ Now, with  $K_\alpha$, $\Omega_\alpha$ and $\Theta_\alpha$  as defined in \eqref{K} and \eqref{regions}, let $X_\alpha$ denote the subspace  of $\ell^\infty$ consisting of sequences $x$ for which $F_x$ extends analytically to $\Theta_\alpha$ and satisfies $\sup\{K_\alpha(\lambda)|F_x(\lambda)|:\lambda\in\Theta_\alpha\}<\infty$. This space is non-trivial, containing for instance any finitely supported sequence  as well as the constant sequence $(1,1,1,\dots)$, and, by an application of Vitali's theorem, it is complete  under the norm $\|\cdot\|_{X_\alpha}$ given by $\|x\|_{X_\alpha}:=\|x\|_\infty+\| x\|_\alpha,$ where $ \|x\|_\alpha:=\sup\{K_\alpha(\lambda)|F_x(\lambda)|:\lambda\in\Theta_\alpha\}.$   Consider the restriction $T:=S|_{X_\alpha}$ to $X_\alpha$ of the left-shift operator $S\in\B(\ell^\infty)$. 
 
 Given    $x\in X_\alpha$ and $|\lambda|>1$,  $F_{Tx}(\lambda)=\lambda F_x(\lambda)-x_1$, so $F_{Tx}$ extends analytically to $\Theta_\alpha$ and $\|Tx\|_\alpha\leq \frac{1}{2}\|x\|_\infty+2\|x\|_\alpha$. Thus $T$ maps $X_\alpha$ into itself and defines an element of $\B(X_\alpha)$ with  norm $\|T\|\leq2$. More generally, having fixed some $x\in X_\alpha$ and given $n\geq0$, let   $F_n:=F_{T^nx}$. Then $F_{n}$ extends analytically to $\Theta_\alpha$ and is given, for $|\lambda|>1$,  by
  \begin{equation}\label{Laplace_of_shift}F_n(\lambda)=\sum_{k=1}^\infty\frac{x_{n+k}}{\lambda^{k}}=\lambda^nF_{x}(\lambda)-\sum_{k=1}^{n}\lambda^{n-k}x_k.\end{equation}  
 Writing $\A$ for the annulus $\{\lambda\in\CC:1<|\lambda|<2\}$, it follows that $$|F_n(\lambda)|\leq\begin{cases}\frac{\|x\|_\infty}{|1-|\lambda||}&\text{if $\lambda\in\A$,} \\\frac{\|x\|_\infty}{|1-|\lambda||}+|F_{x}(\lambda)|&\text{if $\lambda\in\Theta_\alpha\cap\DD$,}\end{cases}$$
and, in particular, $|F_n(\lambda)|\leq |1-|\lambda||^{-1}\|x\|_{X_\alpha}$ for all $\lambda\in\Theta_\alpha\backslash\T$. Let $\lambda\in\Theta_\alpha$ be given. If $\lambda\in(1,2)$, then  $K_\alpha(\lambda)|F_n(\lambda)|=0$. Suppose therefore that  $\theta:=\arg\lambda$ satisfies $0<|\theta|\leq\pi$, and note that  $\Omega_{2r_\lambda,\theta}\subset\Theta_\alpha$, where $r_\lambda:=\frac{1}{4}K_\alpha(\lambda)$ and  $\Omega_{r,\theta}$ is  defined, for $r\in(0,1)$, as in \eqref{Omega_r}. Now either $|1-|\lambda||>r_\lambda$, in which case $K_\alpha(\lambda)|F_n(\lambda)|\leq C\|x\|_\alpha$ for some constant $C$ which is independent of  $x\in X_\alpha$, $n\geq0$ and $\lambda$, or $|1-|\lambda||\leq r_\lambda$. In the latter case $\lambda\in\Omega_{r_\lambda,\theta}$, so the same estimate follows from Lemma~\ref{Levinson} applied to the function $F_n$ on the disc $\Omega_{2r_\lambda,\theta}$. Thus $\|T^nx\|_\alpha \leq C\|x\|_{X_\alpha}$ for some constant $C$ which is independent of $x\in X_\alpha$ and  $n\geq0$. Since moreover $\|T^n x\|_\infty\leq\|x\|_\infty$ for all $n\geq0$, it follows that $T$ is power-bounded.
  
Now fix $x\in X_\alpha$ and let $\Omega:=\{\lambda\in\CC:1<|\lambda|<\frac{3}{2}\}$. Then $\Omega\subset\rho(T)$ and  $(R(\lambda,T)x)_{n+1}=F_{n}(\lambda)$ for all $\lambda\in\Omega$ and all $n\geq0$, where $F_n$ is as above. In particular, \eqref{Laplace_of_shift}  remains true for each $n\geq0$ when the left-hand side is replaced by $(R(\lambda,T)x)_{n+1}$, so the argument in the previous paragraph shows that   \begin{equation}\label{inf_bound}K_\alpha(\lambda)\|R(\lambda,T)x\|_\infty\leq  C\|x\|_{X_\alpha},\end{equation} where $C$ is independent of both  $x\in X_\alpha$ and $\lambda\in\Omega$. The aim now is to show that  $K_\alpha(\lambda)\|R(\lambda,T)x\|_\alpha\leq C\|x\|_\alpha$ for all $\lambda\in \Omega$, from which it will follow that the norm of the resolvent of $T$ grows at most polynomially. Since the estimate holds trivially when $\lambda$ is real, assume that $\lambda\in\Omega$ satisfies $0<|\arg\lambda|\leq\pi$ and let $F_\lambda:=F_{R(\lambda,T)x}$. Then, for $|\mu|\geq\frac{3}{2}$, 
$$\begin{aligned}
F_\lambda(\mu)=\sum_{n=1}^\infty\sum_{k=1}^\infty\frac{x_{k+n-1}}{\lambda^{k}\mu^{n}}=\sum_{n=1}^\infty\frac{\lambda^{n-1}}{\mu^{n}}\left(F_x(\lambda)-\sum_{k=1}^{n-1}\frac{x_{k}}{\lambda^{k}}\right)=-\frac{F_{x}(\lambda)-F_x(\mu)}{\lambda-\mu},
\end{aligned}$$
so $F_\lambda$ extends analytically to $\Theta_\alpha$, taking the values
 \begin{equation}\label{F_lambda}
 F_\lambda(\mu)=\begin{cases}-\frac{F_{x}(\lambda)-F_{x}(\mu)}{\lambda-\mu}&\text{if $\mu\in\Theta_\alpha\backslash\{\lambda\}$,} \\-F_x'(\lambda)&\text{if $\mu=\lambda$.}\end{cases} \end{equation}
Now let $\mu\in\Theta_\alpha$  and set $M_\alpha(\lambda,\mu):=\frac{1}{4}\max\{K_\alpha(\lambda),K_\alpha(\mu)\},$ which is positive by the assumption on $\arg\lambda$. If $|\lambda-\mu|>M_\alpha(\lambda,\mu)$, then by \eqref{F_lambda}  
\begin{equation}\label{est1} K_\alpha(\lambda)K_\alpha(\mu)\left|F_\lambda(\mu)\right|\leq\frac{K_\alpha(\lambda)+K_\alpha(\mu)}{M_\alpha(\lambda,\mu)}\|x\|_\alpha\leq C\|x\|_\alpha,\end{equation}
 where $C$ is independent of $\lambda$ and $\mu$. Now suppose that $|\lambda-\mu|\leq M_\alpha(\lambda,\mu)$. By Cauchy's formula, \begin{equation}\label{Cauchy_int}F_\lambda(\mu)=\frac{1}{2\pi\ii }\oint_{\Gamma}\frac{F_x(\lambda)-F_x(z)}{(\lambda-z)(z-\mu)}\,\dd z,\end{equation} where $\Gamma$ is any contour in $\Theta_\alpha$  whose interior  contains the point $\mu$ and is itself contained in $\Theta_\alpha$. If $|\lambda-\mu|\leq \frac{1}{4}K_\alpha(\lambda),$ choose $\Gamma$ to be the circle with centre $\lambda$ and  radius $\frac{1}{2}K_\alpha(\lambda)$, so that $\Gamma\subset\Theta_\alpha$ by the definitions of $\Theta_\alpha$ and $\Omega$. Elementary estimates show that, for any $z\in\Theta_\alpha$ satisfying $|\lambda-z|\leq\frac{1}{2}K_\alpha(\lambda)$,  $c|\arg\lambda|\leq|\arg z|\leq C|\arg\lambda|$ and hence $cK_\alpha(\lambda)\leq K_\alpha(z)\leq CK_\alpha(\lambda)$, where $c$ and $C$ are independent of $\lambda$ and $z$.  This applies in particular to all $z\in\Gamma$ and also to $z=\mu$. Since moreover  $|\mu-z|\geq \frac{1}{4}K_\alpha(\lambda)$  for all $z\in \Gamma$, it follows from  \eqref{Cauchy_int}  that 
 \begin{equation}\label{est2}K_\alpha(\lambda)K_\alpha(\mu)|F_\lambda(\mu)|\leq \frac{C\|x\|_\alpha}{K_\alpha(\lambda)}\oint_\Gamma\left(\frac{K_\alpha(\mu)}{K_\alpha(\lambda)}+\frac{K_\alpha(\mu)}{K_\alpha(z)}\right)|\dd z|\leq C\|x\|_\alpha, \end{equation}
 where $C$ depends neither on $\lambda$ nor on $\mu$.  A similar argument applies when $\frac{1}{4}K_\alpha(\lambda)<|\lambda-\mu|\leq \frac{1}{4}K_\alpha(\mu)$, this time taking $\Gamma$ to be the circle with centre $\lambda$ and radius $\frac{1}{2}K_\alpha(\mu)$. Then  $|\mu-z|\leq\frac{3}{4}K_\alpha(\mu)$ for all $z\in\Gamma$, so  $\Gamma\subset\Theta_\alpha$ as before.  Moreover,   $K_\alpha(\mu)\leq CK_\alpha(z)$ for all $z\in\Gamma$, where $C$ is independent of $\lambda$ and $\mu$, and $K_\alpha(\lambda)<K_\alpha(\mu)$, giving 
 \begin{equation}\label{est3}K_\alpha(\lambda)K_\alpha(\mu)|F_\lambda(\mu)|\leq \frac{C\|x\|_\alpha}{K_\alpha(\mu)}\oint_\Gamma\left(1+\frac{K_\alpha(\lambda)}{K_\alpha(z)}\right)|\dd z|\leq C\|x\|_\alpha. \end{equation}
Combining \eqref{est1}, \eqref{est2} and \eqref{est3}  shows that $K_\alpha(\lambda)\|R(\lambda,T)x\|_\alpha\leq  C\|x\|_\alpha$ for all $\lambda\in\Omega$. Together   with \eqref{inf_bound}, this gives  $K_\alpha(\lambda)\|R(\lambda,T)x\|_{X_\alpha}\leq  C\|x\|_{X_\alpha}$, where $C$ is independent of $x\in X_\alpha$ and $\lambda\in\Omega$, and hence $\sup\{K_\alpha(\lambda)\|R(\lambda,T)\|:\lambda\in\Omega\}<\infty$. In particular,  it follows from by  \eqref{resolvent_lb} that $\sigma(T)\cap\T\subset\{1\}$,  and a simple approximation argument shows that $\|R(\e ^{\ii \theta},T)\|=O(|\theta|^{-\alpha})$ as $\theta\to0$. Furthermore, since  $(1,1,1,\dots)$ is a fixed point of $T$, $1\in\sigma(T)$.

Finally, let the transforms $\C_\alpha$, $\LL_\alpha$ and $\D_\alpha$  be as defined in \eqref{transforms} and note that, given any complex measure $\mu$ whose support is contained  in $\Omega_\alpha$ and for which $\sup\{|(\LL_\alpha\mu)(k)|:k\geq1\}<\infty$, there exists an associated sequence $x^{\mu}\in \ell^\infty$ whose entries are given, for each $k\geq1$, by $x^{\mu}_k:=(\LL_\alpha\mu)(k)$. By Fubini's theorem, $$F_{x^{\mu}}(\lambda)=\sum_{k=1}^\infty\int_{\Omega_\alpha}\frac{z^{k-1}}{\lambda^{k}}\,\dd\mu(z)=\int_{\Omega_\alpha}\frac{\dd\mu(z)}{\lambda-z}=(\C_\alpha\mu)(\lambda)$$ whenever  $|\lambda|>1$,   so $x^{\mu}\in X_\alpha$ provided $\sup\{K_\alpha(\lambda)|(\C_\alpha\mu)(\lambda)|:\lambda\in\Theta_\alpha\}<\infty$.  Note also that, for each $n\geq0$, $(\D_\alpha\mu)(n)=x_{n+1}^{\mu}-x_{n+2}^{\mu}$, which coincides with the first entry of $T^n(I-T)x^{\mu}$. Now, by Lemma~\ref{measure_construction},  it is possible to find a sequence $(n_j)$ of integers, with  $n_j\to\infty$  as $j\to\infty$,  and associated measures $\mu_j$ such that  $\{x^{\mu_j}: j\geq1\}$ is a  bounded subset of $X_\alpha$
and moreover  $|(\D_\alpha\mu_j)(n_j)|^\alpha\geq cn_j^{-1}\log n_j$ for each $j\geq1$. By rescaling if necessary, there is no loss of generality in assuming that $\|x^{\mu_j}\|_{X_\alpha}\leq1$ for all $j\geq1$, so that  $$\|T^{n_j}(I-T)\|\geq \|T^{n_j}(I-T)x^{\mu_j}\|_\infty\geq|(\D_\alpha\mu_j)(n_j)|.$$ Hence \eqref{limsup} holds and the proof is complete.
\end{proof}

\begin{rem}
\label{yuri_rem}
It is possible to replace Lemma~\ref{measure_construction}, which here gives rise to the sequences $x^{\mu}\in X_\alpha$ used to establish to \eqref{limsup}, by a simpler, more ad-hoc construction. Indeed, using the notation introduced in the proof of that result,  let  $$x_k:=\frac{\ell^{1/2}\lambda_0^{k-\ell}}{2^{\ell-1}}\binom{k}{\ell-1}$$ for each $k\geq1$, so that $x_k$ equals the first term of the final sum in \eqref{L_sums} which defines $x_k^\mu$ in the above proof. In the notation of Remark~\ref{prob_rem}, this becomes $$x_k=4\ell^{1/2}(2\lambda_0)^{k-\ell}\PP(Y_\ell=k+1),$$ so the formula for the probability generating function of $Y_\ell$ (see for instance \cite[Section~4.2]{GrWe}) gives $$F_x(\lambda)=\frac{\lambda}{\lambda_0}\frac{2\ell^{1/2}}{2^\ell(\lambda-\lambda_0)^\ell}$$ whenever  $|\lambda|>1$, which should be compared with the right-hand sides of \eqref{C_eq} and \eqref{C_bound}. Since the estimates for $x^\mu$ established in  Lemma~\ref{measure_construction} apply equally to $x$, it follows that  \eqref{limsup} may also be obtained using sequences of this simpler form in the final paragraph of the above proof.
\end{rem}

\begin{rem}
 It is unclear whether Theorem~\ref{log_optimality} can be extended, for instance by modifying the construction in Lemma~\ref{measure_construction}, to  the range $1<\alpha\leq2$.  Note however that, by  Theorem~\ref{Ritt_thm}, the case $\alpha=1$ is necessarily excluded. See \cite[Theorem~1.2]{Dun} for a result relating specifically to the case $\alpha=2$.
\end{rem}

Theorem~\ref{hilbert_poly_decay} below shows that the situation is different when $X$ is  a Hilbert space. It relies on the following preparatory result, which is analogous to \cite[Lemma~2.3]{BT} (see also \cite[Lemma~1.1]{BEPS} and \cite[Lemma~3.2]{LS}) and holds for general Banach spaces. Recall that, if $T\in\B(X)$ is a power-bounded operator with $M:=\sup\{\|T^n\|:n\geq0\}$ and if $\lambda\in\CC$ satisfies $\R\lambda<0$, then it follows from \eqref{eq:resolvent_bound} that $\|R(\lambda,I-T)\|\leq M|\R\lambda|^{-1}$, and hence that the operator $I-T$ is sectorial. Thus, given any $s>0$, the fractional power $(I-T)^s$ is defined as $$(I-T)^s:=\frac{1}{2\pi\ii}\oint_\Gamma\lambda^sR(\lambda,I-T)\,\dd \lambda,$$ where the complex plane is cut along the negative real axis and where $\Gamma$ is any suitable contour  that contains  the point $1$ and otherwise encloses $\sigma(T)$ without touching it. Fractional powers coincide with the usual ones whenever $s\in\NN$, and moreover $(I-T)^{s+t}=(I-T)^s(I-T)^t$ for all $s,t>0$; see for instance  \cite{HaTo} for details.

\begin{mylem}\label{resolvent_lem}
Let $X$ be a complex Banach space and let $T\in\B(X)$ be a power-bounded operator such that $\sigma(T)\cap\T=\{1\}$. Furthermore, let $\alpha\geq1$ and suppose that $\|R(\e^{\ii\theta},T)\|=O(|\theta|^{-\alpha})$ as $\theta\to0$. Then $\sup\{\|(I-T)^\alpha R(\lambda,T)\|:|\lambda|>1\}<\infty$.
\end{mylem}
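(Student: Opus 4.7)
The plan is to split according to whether $|\lambda-1|$ is small or bounded below, and in the small regime to combine a polynomial off-circle bound on the resolvent with the moment inequality for the sectorial operator $A:=I-T$.

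First I would establish the off-circle bound
\[\|R(\lambda,T)\|\leq C|\lambda-1|^{-\alpha}\]
for all $\lambda$ with $|\lambda|>1$ in a neighbourhood of $1$. For $\lambda=r\e^{\ii\theta}$ with $r>1$ and $|\theta|$ small, either the Neumann expansion
\[R(\lambda,T)=R(\e^{\ii\theta},T)\sum_{k\geq 0}\bigl((\e^{\ii\theta}-\lambda)R(\e^{\ii\theta},T)\bigr)^k\]
converges geometrically (when $(r-1)\|R(\e^{\ii\theta},T)\|\leq 1/2$), giving $\|R(\lambda,T)\|\leq 2\|R(\e^{\ii\theta},T)\|\leq C|\theta|^{-\alpha}$; or else $r-1>c|\theta|^\alpha$, in which case the elementary bound $\|R(\lambda,T)\|\leq M/(r-1)$ combined with the implied inequality $|\theta|\leq C(r-1)^{1/\alpha}$ (together with $|\lambda-1|\leq C(r-1+|\theta|)$ and $\alpha\geq 1$) yields the same polynomial conclusion. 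This is essentially the implication, referenced just before Theorem~\ref{Ritt_thm}, that a power-bounded $T$ with $\|R(\e^{\ii\theta},T)\|=O(|\theta|^{-1})$ is a Ritt operator, here used with a general exponent $\alpha\geq 1$.

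In the regime $|\lambda-1|\geq 1$, the desired uniform bound is immediate: $(I-T)^\alpha$ is a bounded operator, since the bounded family $\e^{-t}\e^{tT}$ shows that $-A$ generates a bounded $C_0$-semigroup and hence $A$ is sectorial with $A^\alpha\in\B(X)$; and $\|R(\lambda,T)\|$ is uniformly bounded on $\{|\lambda|>1,\,|\lambda-1|\geq 1\}$, since $R(\cdot,T)$ extends continuously to $\T\setminus\{1\}$ by the hypothesis $\sigma(T)\cap\T=\{1\}$ and satisfies $\|R(\lambda,T)\|\leq M/(|\lambda|-1)$ for $|\lambda|\geq 2$.

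The crux is the case $|\lambda|>1$, $|\lambda-1|<1$. Write $\alpha=n+\beta$ with $n=\lfloor\alpha\rfloor\geq 1$ and $\beta\in[0,1)$. Iterating the identity $(I-T)R(\lambda,T)=I-(\lambda-1)R(\lambda,T)$ gives
\[(I-T)^n R(\lambda,T)=\sum_{k=0}^{n-1}(-(\lambda-1))^k(I-T)^{n-1-k}+(-(\lambda-1))^n R(\lambda,T),\]
so multiplying by $(I-T)^\beta$ on the left yields
\[(I-T)^\alpha R(\lambda,T)=\sum_{k=0}^{n-1}(-(\lambda-1))^k(I-T)^{n-1-k+\beta}+(-(\lambda-1))^n(I-T)^\beta R(\lambda,T).\]
Each exponent $n-1-k+\beta$ lies in $[\beta,\alpha-1]\subset[0,\infty)$, so each operator $(I-T)^{n-1-k+\beta}$ is bounded and the sum is uniformly controlled when $|\lambda-1|\leq 1$. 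For the last term with $\beta=0$, the bound from Step 1 directly gives $|\lambda-1|^n\|R(\lambda,T)\|\leq C$. For $\beta\in(0,1)$, the moment inequality for the sectorial operator $A$, in the form $\|A^\beta x\|\leq C\|Ax\|^\beta\|x\|^{1-\beta}$ applied with $x=R(\lambda,T)y$, gives
\[\|A^\beta R(\lambda,T)\|\leq C\|(I-T)R(\lambda,T)\|^\beta\|R(\lambda,T)\|^{1-\beta}\leq C|\lambda-1|^{\beta-\alpha}=C|\lambda-1|^{-n},\]
where the middle step uses $\|(I-T)R(\lambda,T)\|\leq 1+|\lambda-1|\|R(\lambda,T)\|\leq C|\lambda-1|^{1-\alpha}$, valid since $\alpha\geq 1$ and $|\lambda-1|\leq 1$. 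Multiplying by $|\lambda-1|^n$ then gives a bounded contribution, completing the argument.

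The main obstacle is Step 1: keeping track of the interplay between $|\theta|$, $r-1$ and $|\lambda-1|$ requires a careful two-case analysis, and it is here that the assumption $\alpha\geq 1$ is used decisively (notably to compare $(r-1)^{1/\alpha}$ with $|\lambda-1|$). Once the off-circle polynomial resolvent bound is in place, the remainder is elementary algebra together with a single invocation of the moment inequality for a sectorial operator.
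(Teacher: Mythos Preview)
Your argument is correct and follows the same overall architecture as the paper's proof: first establish the off-circle bound $\|R(\lambda,T)\|\leq C|1-\lambda|^{-\alpha}$ for $|\lambda|>1$ near $1$, then combine an algebraic expansion of $(I-T)^nR(\lambda,T)$ with the moment inequality to handle the fractional part $\beta$. The algebraic and interpolation steps differ only cosmetically (the paper interpolates between $(I-T)^{n-1}R(\lambda,T)$ and $(I-T)^nR(\lambda,T)$ to reach $(I-T)^{\alpha-1}R(\lambda,T)$, whereas you interpolate between $R(\lambda,T)$ and $(I-T)R(\lambda,T)$ to reach $(I-T)^\beta R(\lambda,T)$).

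The genuine difference is in how the off-circle bound is obtained. The paper proves $\sup_{\lambda\in\A}\|(1-\lambda)^\alpha R(\lambda,T)\|<\infty$ by a Phragm\'en--Lindel\"of type argument: it introduces the auxiliary function $H_r(\lambda)=(1+r^2/\varphi(\lambda)^2)(1-\lambda)^\alpha R(\lambda,T)$, checks that this is bounded on $\partial\Omega_r$ uniformly in $r$, and invokes the maximum principle. Your argument is more elementary: for $\lambda=r\e^{\ii\theta}$ you split according to whether $(r-1)\|R(\e^{\ii\theta},T)\|\leq\tfrac12$, using the Neumann expansion about $\e^{\ii\theta}$ in one case and the crude bound $\|R(\lambda,T)\|\leq M/(r-1)$ in the other, together with $\|R(\e^{\ii\theta},T)\|\geq|\theta|^{-1}$ to compare $r-1$ with $|\theta|$. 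Your route avoids the complex-analytic machinery entirely and makes transparent where $\alpha\geq1$ enters (in comparing $(r-1)^{1/\alpha}$ with $|\lambda-1|$); the paper's route, on the other hand, fits naturally with the contour techniques used elsewhere in the paper and generalises more readily to other weight functions than $|1-\lambda|^{-\alpha}$.
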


\begin{proof}[\textsc{Proof}] 
By \eqref{eq:resolvent_bound} it suffices to prove that $\sup\{\|(I-T)^\alpha R(\lambda,T)\|:\lambda\in\A\}<\infty,$ where $\A:=\{\lambda\in\CC:1<|\lambda|<2\}$.  A first step towards this result is to establish that, under the above assumptions,   $\sup\{\|(1-\lambda)^\alpha R(\lambda,T)\|:\lambda\in\A\}<\infty$. Thus, given $r\in (0,1)$, let $$\Omega_r:=\left\{\lambda\in\CC:1\leq|\lambda|\leq2\;\mbox{and}\;\left|\frac{1+r^2}{1-r^2}-\lambda\right|\geq \frac{2r}{1-r^2}\right\}$$ and let the map $H_r:\Omega_r\to\B(X)$ be defined by $$H_r(\lambda):=\left(1+\frac{r^2}{\varphi(\lambda)^2}\right)(1-\lambda)^\alpha R(\lambda,T),$$ so that  $H_r(\lambda)=h_r(\lambda)(1-\lambda)^\alpha R(\lambda,T),$ where $h_r:=g_r\circ\varphi$ with $g_r$ and $\varphi$ as in  the proof of Theorem~\ref{thm}. Then $\sup\{|h_r(\lambda)|:r\in(0,\frac{1}{2}),\lambda\in\Omega_r\}<\infty$ and, by the argument leading to equation~\eqref{h_bound}, $|h_r(\lambda)|\leq Cr^{-1}(|\lambda|-1)$ for all $\lambda\in\partial\Omega_r\cap\gamma_r$,  where $\gamma_r$ is as defined in \eqref{gamma_r}. Note also that $|1-\lambda|\leq C r$ for all $\lambda\in\partial\Omega_r\cap\gamma_r$. Thus, by \eqref{eq:resolvent_bound} and the assumption on the resolvent, $\|H_r(\lambda)\|$ is uniformly bounded, independently of $r$, for all $\lambda\in\partial\Omega_r$ and hence, by the maximum principle, $\sup\{\|H_r(\lambda)\|:r\in(0,\frac{1}{2}),\lambda\in\Omega_r\}<\infty$.  Since, given  any $\lambda\in\A$, there exists $r\in(0,\frac{1}{2})$ such that $\lambda\in\Omega_r$ and $|h_r(\lambda)|\geq\frac{1}{2}$, the claim  follows.

Now let  $n\in\NN$ and  $\beta\in[0,1)$ be such  that $\alpha=n+\beta$, and note that, for any $k\geq0$ and $\lambda\in\rho(T)$,   \begin{equation*}\label{res_exp}\|(I-T)^k R(\lambda,T)\|\leq|1-\lambda|^k\|R(\lambda,T)\|+\sum_{j=0}^{k-1}\binom{k}{j}|1-\lambda|^j\|\lambda-T\|^{k-j-1}.\end{equation*}   
Setting  $k=n-1$ and $k=n$, this shows, respectively, that $$\|(I-T)^{n-1}R(\lambda,T)\|\leq \frac{C}{|1-\lambda|^{1+\beta}}\quad\mbox{and}\quad\|(I-T)^{n}R(\lambda,T)\|\leq \frac{C}{|1-\lambda|^{\beta}}$$ for all $\lambda\in \A$. In particular, if $\beta=0$, the proof is complete.  If $\beta\ne0$, on the other hand, the moment inequality (see for instance \cite[Corollary~7.2]{Ha}) gives $$\|(I-T)^{\alpha-1} R(\lambda,T)\|\leq C\|(I-T)^{n-1} R(\lambda,T)\|^{1-\beta} \|(I-T)^n R(\lambda,T)\|^\beta,$$ and hence  $\|(I-T)^{\alpha-1}R(\lambda,T)\|\leq C|1-\lambda|^{-1}$ for all $\lambda\in\A$. Since $$\|R(\lambda,T)(I-T)^{\alpha}\|\leq |1-\lambda|\|R(\lambda,T)(I-T)^{\alpha-1}\|+\|(I-T)^{\alpha-1}\|$$ for all $\lambda\in\rho(T)$, the result follows.
\end{proof}

The final result shows that the phenomenon described in Theorem~\ref{log_optimality} cannot arise on Hilbert space. For analogous results in the continuous-time setting see \cite[Theorem~2.4]{BT} and \cite[Theorem~7.6]{BCT}; compare also with \cite[Theorem~9]{Nev2}.

\begin{mythm}\label{hilbert_poly_decay}
Let $X$ be a complex Hilbert space and let $T\in\B(X)$ be a power-bounded operator such that $\sigma(T)\cap\T=\{1\}$.  Furthermore, let $\alpha\geq1$. Then $\|R(\e^{\ii\theta},T) \|=O(|\theta|^{-\alpha})$ as $\theta\to0$ if and only if $\|T^n(I-T)\|=O(n^{-1/\alpha})$ as $n\to\infty$.
\end{mythm}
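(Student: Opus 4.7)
The ``only if'' direction is an immediate application of Theorem~\ref{thm2}. Taking $\omega(n)=Cn^{-1/\alpha}$ as a dominating function for $T$ (extended trivially for small $n$), one has $\omega^*(s)\leq(C/s)^\alpha$ up to a multiplicative constant, and substituting into \eqref{eq2} with $c=1/2$ yields $\|R(\e^{\ii\theta},T)\|=O(|\theta|^{-1}+|\theta|^{-\alpha})=O(|\theta|^{-\alpha})$ as $\theta\to0$, where the last equality uses $\alpha\geq1$.

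For the substantive ``if'' direction, the plan is to deduce the target estimate from the intermediate bound $\|T^n(I-T)^\alpha\|=O(n^{-1})$ by invoking the moment inequality for fractional powers of the sectorial operator $I-T$ (with exponents $0<1<\alpha$) applied to $T^n y$ for arbitrary $\|y\|\leq 1$: taking suprema yields
$$
\|T^n(I-T)\|\leq C\|T^n\|^{(\alpha-1)/\alpha}\|T^n(I-T)^\alpha\|^{1/\alpha}=O(n^{-1/\alpha}).
$$
Lemma~\ref{resolvent_lem} converts the polynomial resolvent bound into the uniform estimate $C_0:=\sup\{\|(I-T)^\alpha R(\lambda,T)\|:|\lambda|>1\}<\infty$, which drives the rest of the argument.

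The Hilbert space hypothesis then enters twice via Parseval's identity for vector-valued power series. Differentiating the Neumann expansion of $R(z,T)y$ gives $R(z,T)^2y=\sum_{k\geq0}(k+1)T^k y\,z^{-k-2}$ for $|z|>1$, and so on the circle $|z|=r$ with $r>1$ Parseval reads
$$
\sum_{k=0}^\infty(k+1)^2 r^{-2k-4}\|T^k y\|^2=\frac{1}{2\pi}\int_{-\pi}^\pi\|R(r\e^{\ii\theta},T)^2 y\|^2\,\dd\theta.
$$
Applied with $y=(I-T)^\alpha x$ and combined with the factorisation $R(z,T)^2(I-T)^\alpha=[(I-T)^\alpha R(z,T)]\cdot R(z,T)$, the bound $C_0$ yields the pointwise inequality $\|R(z,T)^2(I-T)^\alpha x\|\leq C_0\|R(z,T)x\|$. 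A second application of Parseval to $R(z,T)x$, combined with the power-boundedness of $T$, bounds the right-hand integral by $CC_0^2\|x\|^2/(r-1)$. Choosing $r=1+1/n$ and restricting the left-hand Parseval sum to $k\in[n+1,2n]$ (on which the weight $(k+1)^2r^{-2k-4}$ is $\gtrsim n^2$) then gives $\sum_{k=n+1}^{2n}\|T^k(I-T)^\alpha x\|^2\leq C\|x\|^2/n$; the standard averaging inequality $\|T^{2n}(I-T)^\alpha x\|\leq M\|T^k(I-T)^\alpha x\|$ valid for such $k$ (with $M:=\sup_k\|T^k\|$) finally delivers $\|T^n(I-T)^\alpha\|=O(n^{-1})$.

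The main obstacle is precisely this twofold use of Parseval: any attempt to bound $\|R(z,T)^2(I-T)^\alpha\|$ pointwise and absorb $\|R(z,T)\|$ directly would introduce a factor blowing up like $(r-1)^{-1}$, defeating the balancing $r=1+1/n$. It is exactly the $L^2$ estimate on $R(z,T)x$ supplied by Parseval---unavailable on general Banach spaces---that removes the logarithmic factor of Corollary~\ref{cor:poly_bounds} in the Hilbert setting and thus distinguishes Theorem~\ref{hilbert_poly_decay} from the Banach-space counterexample of Theorem~\ref{log_optimality}.
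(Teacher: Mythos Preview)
Your argument is correct and follows the same overall route as the paper: invoke Lemma~\ref{resolvent_lem} to obtain $\sup_{|\lambda|>1}\|(I-T)^\alpha R(\lambda,T)\|<\infty$, use Parseval's identity twice in the Hilbert space to derive the intermediate bound $\|T^n(I-T)^\alpha\|=O(n^{-1})$, and then finish with the moment inequality. Two remarks are in order. First, you have the labels reversed: in the statement ``$P$ if and only if $Q$'' with $P$ the resolvent bound and $Q$ the decay bound, the easy direction via Theorem~\ref{thm2} is ``$P$ if $Q$'' (decay $\Rightarrow$ resolvent), and the substantive Hilbert-space argument establishes ``$P$ only if $Q$''. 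Second, the technical implementation of the Parseval step differs slightly from the paper's: rather than work with $R(\lambda,T)^2$ at radius $r=1+1/n$ and extract the estimate by averaging over $k\in[n+1,2n]$, the paper applies Parseval to the truncated function $F_n(\lambda)=\lambda R(\lambda,T)\sum_{k=0}^n\lambda^{-k}T^k$, lets $r\to1+$ to obtain the clean inequality $\sum_{k=0}^n(k+1)^2\|T^k(I-T)^\alpha x\|^2\leq M^2B^2(n+1)\|x\|^2$, and then recovers $\|T^n(I-T)^\alpha\|=O(n^{-1})$ via a pairing with $(T^*)^{n-k}y$ and Cauchy's inequality. Both variants are of comparable length and difficulty.
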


\begin{proof}[\textsc{Proof}] 
Suppose  that $\|R(\e^{\ii\theta},T) \|=O(|\theta|^{-\alpha})$ as $\theta\to0$, so that, by Lemma~\ref{resolvent_lem},  $\sup\{\|(I-T)^\alpha R(\lambda,T)\|:|\lambda|>1\}<\infty$. For $n\geq0$ and $|\lambda|>1$, let $$F_n(\lambda):=\lambda R(\lambda,T)\sum_{k=0}^n\lambda^{-k}T^k.$$ Then a simple calculation using the series expansion for the resolvent shows that $$F_n(\lambda)=\sum_{k=0}^\infty(\min\{k,n\}+1) \lambda^{-k}T^k,$$ and hence, by Parseval's identity, $$\sum_{k=0}^\infty(\min\{k,n\}+1)^2\frac{\|T^kx\|^2}{r^{2k}}=\frac{1}{2\pi}\int_0^{2\pi}\big\|F_n\big(r\e^{\ii\theta}\big)x\big\|^2\,\dd\theta$$ for all $n\geq0$,  $x\in X$ and  $r>1$. Replacing $x$ with $(I-T)^\alpha x$ and letting $B:=\sup\{\|(I-T)^\alpha R(\lambda,T)\|:|\lambda|>1\}$, it follows from the definition of $F_n$ that $$\sum_{k=0}^n\frac{(k+1)^2}{r^{2k}}\|T^k(I-T)^\alpha x\|^2\leq\frac{B^2r^2}{2\pi}\int_0^{2\pi}\left\| \sum_{k=0}^nr^{-k}\e^{-\ii k\theta}T^kx\right\|^2\,\dd\theta,$$or indeed $$\sum_{k=0}^n\frac{(k+1)^2}{r^{2n}}\|T^k(I-T)^\alpha x\|^2\leq B^2r^2\sum_{k=0}^n\frac{\|T^kx\|^2}{r^{2k}},$$by another application of Parseval's identity.  Letting $r\to1+$, this gives \begin{equation}\label{sum_bd}\sum_{k=0}^n(k+1)^2\| T^k(I-T)^\alpha x\|^2\leq M^2B^2(n+1)\|x\|^2,\end{equation} where $M:=\sup\{\|T^n\|:n\geq0\}$. Now, for  $y\in X$ and $n\geq0$, $$\big((n+2)T^n(I-T)^\alpha x,y\big)=\frac{2}{n+1}\sum_{k=0}^n\big((k+1)T^k(I-T)^\alpha x,(T^*)^{n-k}y\big),$$ where $T^*$ denotes the adjoint of $T$. By \eqref{sum_bd} and Cauchy's inequality, the right-hand side is bounded above in modulus by $2M^2B\|x\|\|y\|$, and hence 
\begin{equation}\label{a=1}\|T^n(I-T)^\alpha\|\leq\frac{2M^2B}{n+2}\end{equation} 
for all  $n\geq0$. Thus the proof is complete in the case $\alpha=1$. If $\alpha>1$, on the other hand,  the moment inequality gives  $$\|T^n(I-T)\|\leq C\|T^n\|^{(\alpha-1)/\alpha}\|T^n(I-T)^\alpha\|^{1/\alpha}$$ for all $n\geq0$, and the result now follows from \eqref{a=1} and the fact that $T$ is power-bounded. 

The converse implication is a consequence of Theorem~\ref{thm2}.
\end{proof}

\begin{rem}\label{Q_rem}
The above proof follows the method used in \cite{BCT}. An alternative approach, analogous to that of \cite{BT}, is to consider the operator $Q\in\B(X\times X)$ given by $Q(x,y):=(Tx+ T(I-T)^\alpha y,Ty)$. Then, for $n\geq0$, $Q^n$ is represented by the matrix $$Q^n=
\left(
\begin{array}{ccc}
  T^n&nT(I-T)^\alpha     \\
  0&T^n       
\end{array}
\right)$$
and, in particular,  $Q$ is power-bounded if and only if $\sup\{\|nT^n(I-T)^\alpha\|:n\geq0\}<\infty$. Since the latter is equivalent, by the moment inequality, to having $\|T^n(I-T)\|=O(n^{-1/\alpha})$ as $n\to\infty$, the main implication of Theorem~\ref{hilbert_poly_decay} can  be deduced from  results in \cite{Gom}, which characterise power-boundedness of an operator on a Hilbert space in terms of a certain integrability condition on its resolvent.
\end{rem}

\begin{rem} As in \cite[Theorem~2.4]{BT}, the equivalent statements in Theorem~\ref{hilbert_poly_decay} are also equivalent to the condition that, for every $x\in X$, $\|T^n(I-T)x\|=o(n^{-1/\alpha})$ as $n\to \infty$, which in turn is equivalent, by another application of the moment inequality, to having $nT^n(I-T)^\alpha\to0$ in the strong operator topology as $n\to\infty$. One implication follows from the general observation that, given any complex Banach space $X$ and a power-bounded mean ergodic operator $T\in\B(X)$ satisfying $\sigma(T)\cap\T\subset\{1\}$, the powers $T^n$ converge strongly, as $n\to\infty$,  to the projection $P$ onto $\Fix(T)$ along the closure of ${\Ran}(I-T)$; see  \cite[Theorem~4.1]{BaLy}. Indeed, if $\|T^n(I-T)\|=O(n^{-1/\alpha})$ as $n\to\infty$, then the operator $Q\in\B(X\times X)$ defined in Remark~\ref{Q_rem} is power-bounded, and furthermore  $\sigma(Q)=\sigma(T)$ and $\Fix(Q)=\Fix(T)\times\Fix(T)$. Hence applying this observation to $Q$ shows that, for any $x,y\in X$, $T^nx+nT^n(I-T)^\alpha y\to Px$ as $n\to\infty$. Since $T^nx\to Px$ as $n\to\infty$ by the same observation applied to $T$, it follows that $nT^n(I-T)^\alpha\to0$ in the strong operator topology as $n\to\infty$. The converse implication is a simple consequence of the  Uniform Boundedness Theorem.
\end{rem}

\section*{Acknowledgements}

The author is grateful to Professor C.J.K.\ Batty for his guidance and his careful reading of an earlier version of this work, to Professor Y.\ Tomilov for sharing an observation that led to Remark~\ref{yuri_rem} and for a number of helpful discussions during his visit to Oxford in autumn 2012, and finally to the EPSRC for its financial support.


\begin{thebibliography}{10}

\bibitem{AR}
G.R. Allan and T.J. Ransford.
\newblock {Power-dominated elements in a Banach algebra}.
\newblock {\em Studia Math.}, 94:63--79, 1989.

\bibitem{Green_book}
W.~Arendt, C.J.K. Batty, M.~Hieber, and F.~Neubrander.
\newblock {\em Vector-valued Laplace transforms and Cauchy problems}.
\newblock Birkh\"auser, Basel, second edition, 2011.

\bibitem{ALM}
C.~Arhancet and C.~Le Merdy.
\newblock {Dilation of Ritt operators on $L^p$-spaces}.
\newblock {\em Israel J. Math.}, to appear.

\bibitem{BaLy}
C.~Badea and Y.I. Lyubich.
\newblock {Geometric, spectral and asymptotic properties of averaged products
  of projections in Banach spaces}.
\newblock {\em Studia Math.}, 201(1):21--35, 2010.

\bibitem{BEPS}
A.~B\'atkai, K.-J. Engel, J.~Pr\"uss, and R.~Schnaubelt.
\newblock Polynomial stability of operator semigroups.
\newblock {\em Math. Nachr.}, 279:1425--1440, 2006.

\bibitem{Ba}
C.J.K. Batty.
\newblock Asymptotic behaviour of semigroups of operators.
\newblock In {\em Functional Analysis and Operator Theory}. Banach Center
  Publications, Volume~30, Polish Academy of Sciences, Warsaw, 1994.

\bibitem{BCT}
C.J.K. Batty, R.~Chill, and Y.~Tomilov.
\newblock Fine scales of decay of operator semigroups.
\newblock Preprint, 2013.

\bibitem{BD}
C.J.K. Batty and T.~Duyckaerts.
\newblock Non-uniform stability for bounded semi-groups in {B}anach spaces.
\newblock {\em J. Evol. Equ.}, 8:765--780, 2008.

\bibitem{Blu2}
S.~Blunck.
\newblock {Analyticity and discrete maximal regularity on $L_p$-spaces}.
\newblock {\em J. Funct. Anal.}, 183:211--230, 2001.

\bibitem{Blu}
S.~Blunck.
\newblock Maximal regularity of discrete and continuous time evolution
  equations.
\newblock {\em Studia Math.}, 146:157--176, 2001.

\bibitem{BT}
A.A. Borichev and Y.~Tomilov.
\newblock Optimal polynomial decay of functions and operator semigroups.
\newblock {\em Math. Ann.}, 347:455--478, 2010.

\bibitem{CT}
R.~Chill and Y.~Tomilov.
\newblock Stability of operator semigroups: ideas and results.
\newblock In {\em Perspectives in Operator Theory}. Banach Center Publications,
  Volume 75, Polish Academy of Sciences, Warsaw, 2007.

\bibitem{DLM}
P.~Diaconis, G.~Lebeau, and L.~Michel.
\newblock {Geometric analysis for the metropolis algorithm on Lipschitz
  domains}.
\newblock {\em Invent. Math.}, 185(2):239--281, 2011.

\bibitem{Dun3}
N.~Dungey.
\newblock Time regularity for random walks on locally compact groups.
\newblock {\em Probab. Theory Related Fields}, 137:429Ð--442, 2007.

\bibitem{Dun}
N.~Dungey.
\newblock On time regularity and related conditions for power-bounded
  operators.
\newblock {\em Proc. Lond. Math. Soc.}, 97(3):97--Ð116, 2008.

\bibitem{Dun4}
N.~Dungey.
\newblock Time regularity for aperiodic or irreducible random walks on groups.
\newblock {\em Hokkaido Math. J.}, 37:19Ð--40, 2008.

\bibitem{Dun2}
N.~Dungey.
\newblock Subordinated discrete semigroups of operators.
\newblock {\em Trans. Amer. Math. Soc.}, 363(4):1721--Ð1741, 2011.

\bibitem{EFR}
O.~El-Fallah and T.~Ransford.
\newblock {Extremal growth and powers of operators satisfying resolvent
  conditions of Kreiss-Ritt type}.
\newblock {\em J. Funct. Anal.}, 196(1):135--154, 2002.

\bibitem{FS}
P.~Flajolet and R.~Sedgewick.
\newblock {\em Analytic Combinatorics}.
\newblock Cambridge University Press, Cambridge, 2009.

\bibitem{Gom}
A.M. Gomilko.
\newblock {Cayley transform of the generator of a uniformly bounded
  $C_0$-semigroup of operators}.
\newblock {\em Ukrainian Math. J.}, 56(8):1212--1226, 2004.

\bibitem{GrWe}
G.~Grimmett and D.~Welsh.
\newblock {\em Probability: An Introduction}.
\newblock Oxford University Press, 1986.

\bibitem{Ha}
M.~Haase.
\newblock A functional calculus description of real interpolation spaces for
  sectorial operators.
\newblock {\em Studia Math.}, 171(2):177--195, 2005.

\bibitem{HaTo}
M.~Haase and Y.~Tomilov.
\newblock Domain characterizations of certain functions of power-bounded
  operators.
\newblock {\em Studia Math.}, 196(3):265--288, 2010.

\bibitem{KMSOT}
N.~Kalton, S.~Montgomery-Smith, K~Olieszkiewicz, and Y.~Tomilov.
\newblock Power-bounded operators and related norm estimates.
\newblock {\em J. Lond. Math. Soc.}, 70(2):463--478, 2004.

\bibitem{KT}
Y.~Katznelson and L.~Tzafriri.
\newblock On power bounded operators.
\newblock {\em J. Funct. Anal.}, 68:313--328, 1986.

\bibitem{Koo}
P.~Koosis.
\newblock {\em The Logarithmic Integral}, volume~1.
\newblock Cambridge University Press, Cambridge, 1988.

\bibitem{LLM}
F.~Lancien and C.~Le Merdy.
\newblock {On functional calculus properties of Ritt operators}.
\newblock Preprint, 2013.

\bibitem{LS}
Y.~Latushkin and R.~Shvydkoy.
\newblock {Hyperbolicity of semigroups and Fourier multipliers}.
\newblock In A.A. Borichev and N.K. Nikolski, editors, {\em Systems,
  Approximations, Singular Integral Operators, and Related Topics}, volume 129
  of {\em Operator Theory: Advances and Applications}, pages 341--363.
  Birkh\"auser, Basel, 2001.

\bibitem{Leka}
Z.~L\'eka.
\newblock {A Katznelson-Tzafriri type theorem in Hilbert spaces}.
\newblock {\em Proc. Amer. Math. Soc.}, 137(11):3763--3768, 2009.

\bibitem{Lyu2}
Y.~Lyubich.
\newblock {Spectral localization, power boundedness and invariant subspaces
  under Ritt's type condition}.
\newblock {\em Studia Math.}, 134(2):153--167, 1999.

\bibitem{Mar}
M.M. Mart\'inez.
\newblock Decay estimates of functions through singular extensions of
  vectror-valued {Laplace transforms}.
\newblock {\em J. Anal. Appl.}, 375:196--206, 2011.

\bibitem{LM}
C.~Le Merdy.
\newblock {$H^\infty$ functional calculus and square function estimates for
  Ritt operators}.
\newblock Preprint, 2012.

\bibitem{LMX}
C.~Le Merdy and Q.~Xu.
\newblock {Maximal theorems and square functions for analytic operators on
  $L^p$-spaces}.
\newblock {\em J. Lond. Math. Soc.}, to appear.

\bibitem{NZ}
B.~Nagy and J.~Zem\'anek.
\newblock A resolvent condition implying power boundedness.
\newblock {\em Studia Math.}, 134(2):143--151, 1999.

\bibitem{Nev}
O.~Nevanlinna.
\newblock {\em Convergence of Iterations for Linear Equations}.
\newblock Lecture Notes in Mathematics (ETH Z\"urich). Birkh\"auser, Basel,
  1993.

\bibitem{Nev3}
O.~Nevanlinna.
\newblock On the growth of the resolvent operators for power bounded operators.
\newblock In {\em Linear Operators}. Banach Center Publications, Volume~38,
  Polish Academy of Sciences, 1997.

\bibitem{Nev2}
O.~Nevanlinna.
\newblock Resolvent conditions and powers of operators.
\newblock {\em Studia Math.}, 145(2):113--134, 2001.

\bibitem{OS}
D.~Ornstein and L.~Sucheston.
\newblock An operator theorem on {$L_1$} convergence to zero with applications
  to {M}arkov kernels.
\newblock {\em Ann. Math. Statist.}, 41(5):1631--1639, 1970.

\bibitem{Seifert}
D.~Seifert.
\newblock {Some improvements of the Katznelson-Tzafriri theorem on Hilbert
  space}.
\newblock {\em Proc. Amer. Math. Soc.}, to appear.

\bibitem{Vit}
P.~Vitse.
\newblock {Functional calculus under the TadmorÐRitt condition, and free
  interpolation by polynomials of a given degree}.
\newblock {\em J. Funct. Anal.}, 210(1):43Ð--72, 2004.

\bibitem{Zarr}
M.~Zarrabi.
\newblock {Some results of Katznelson-Tzafriri type}.
\newblock {\em J. Anal. Appl.}, 397:109--118, 2013.

\end{thebibliography}
\end{document}